\documentclass{article}
\usepackage{todonotes}
\usepackage{arxiv}
\usepackage{amsmath,amssymb,amsthm}
\usepackage[utf8]{inputenc} 
\usepackage[T1]{fontenc}    
\usepackage{hyperref}       
\usepackage{url}            
\usepackage{booktabs}       
\usepackage{amsfonts}       
\usepackage{nicefrac}       
\usepackage{microtype}      
\usepackage{lipsum}		
\usepackage{graphicx}
\usepackage{natbib}
\usepackage{doi}
\usepackage{setspace}
\usepackage{color}

\numberwithin{equation}{section} 
\title{Well-posedness of Boussinesq Equation with Linear Restoring Force}

\date{} 					

\author{ Amin Esfahani\thanks{\href{mailto:amin.esfahani@nu.edu.kz}{amin.esfahani@nu.edu.kz}} \\
\textbf{Nazerke Zhardemova\thanks{\href{mailto:nazerke.zhardemova@nu.edu.kz}{nazerke.zhardemova@nu.edu.kz}}}  \\
	School of Computing and Artificial Intelligence (SCAI)\\ Nazarbayev University\\ Astana 010000, Kazakhstan
}

\renewcommand{\undertitle}{}

\newcommand{\R}{\mathbb{R}}

\newtheorem{thm}{Theorem}[section]
\newtheorem{lemma}[thm]{Lemma}

\begin{document}
\maketitle

\begin{abstract}
We establish a local well-posedness theory for the $n-$dimensional Boussinesq equation with a linear restoring force in a modified Sobolev spaces \(H_\omega^s\) and Bourgain-type spaces \(X_\omega^{s,b}\), which are adapted to its modified dispersion relation. The restoring force introduces a new phase function whose behavior  in the low-frequencies is different from ones of the classical Boussinesq equation, that in turn, affects the associated  energy functional. We prove bilinear and trilinear estimates for quadratic and cubic nonlinearities, respectively, which in turn imply local well-posedness under suitable
conditions on \(s\) and \(\omega\). To our knowledge, this is the first well-posedness result for this model.
\end{abstract}

\keywords{Boussinesq equation,  Well-posedness, Bourgain-type spaces, Bilinear/Trilinear estimates}

\medskip\noindent\textbf{\textit{2020 Mathematics Subject Classification:} }{35Q53, 35A01, 42B37}

\section{Introduction}

We study the following Cauchy problem for the Boussinesq equation with linear restoring force
 \begin{equation}\label{IVP}
        \begin{cases}
                u_{tt} = - \gamma u + \alpha \Delta u - \Delta^2 u + \Delta f(u),  \quad  (x,t)\in\mathbb{R}^n\times(0,\infty),\\
                u(x,0)=g(x) \\
                u_t(x,0)=h(x)
            \end{cases}
    \end{equation}
where $\gamma>  {\alpha^2}/{4}$, $\alpha\in\mathbb{R}$, and the nonlinear term is either $f(u)=\pm u^2$ or $f(u)=\pm u^3.$

Before discussing the restoring-force problem, let us review the previous studies in the context of the Boussinesq equation. The classical Boussinesq equation was initially derived by J.Boussinesq in \cite{Boussinesq1872} from Euler’s equations of motion for two-dimensional potential flow beneath a free surface, employing suitable approximations tailored for small amplitude long waves. Later it was referred as a ``bad'' Boussinesq equation due to its ill-posedness. Indeed, the bad Boussinesq equation is linearly unstable due to the exponentially growing Fourier components. In contrast, the terminology ``good'' refers to the positive sign of the fourth-order term, which gives a linearly stable dispersive model, given as
\[
    u_{tt}-u_{xx}+u_{xxxx}=(f(u))_{xx}.
\]
Moreover, it has also been derived in the examination of the dynamics of thin inviscid layers with free surfaces, as well as in the exploration of nonlinear strings, shape-memory alloys, wave propagation in elastic rods, and in the continuum limit of lattice dynamics or coupled electrical circuits.

Early well-posedness results for the ``good'' Boussinesq equation with a general nonlinearity were obtained by Bona and Sachs  \cite{BonaSachs1988}. They proved local well-posedness for initial data $(u_0,u_1)=(u_0,u_2')\in H^{s+2}\times H^{s+1}$ with $s>1/2$. Tsutsumi and Matahashi \cite{TsutsumiMatahashi1991} considered nonlinearities of the form
\(f(u)=|u|^{p-1}u\) and obtained the similar results for the data $ u_0\in H^1(\mathbb R)$ and $ u_1=\chi_{xx}$ with $\chi\in H^1(\mathbb R)$. Linares  \cite{Linares1993} improved these results by using Strichartz estimates for the linear problem and proved local well-posedness for $u_0\in L^2(\mathbb R),u_1=h_x, h\in H^{-1}(\mathbb R),$ for nonlinearities \(f(u)=|u|^{p-1}u\), \(1<p<5\). Fang and Grillakis subsequently established local and
global existence results for Boussinesq-type equations on the circle using
Fourier series and fixed-point arguments
\cite{FangGrillakis1996}. 

A significant low-regularity theory was developed using Bourgain-type Fourier
restriction spaces. Bourgain introduced a new space in his study of nonlinear
Schrödinger and KdV equations, and the method has subsequently been adapted to many other dispersive equations. For the good Boussinesq equation, Farah used these ideas associated with the phase $\varphi_0(\xi)=\sqrt{\xi^2+\xi^4},$
with norm
\[
    \|u\|_{X^{s,b}}
    =
    \left\|
    \langle |\tau|-\varphi_0(\xi)\rangle^b
    \langle \xi\rangle^s
    \widetilde u(\xi,\tau)
    \right\|_{L^2_{\xi,\tau}}.
\]
In this framework, Farah  \cite{Farah2009} proved local well-posedness for the quadratic good
Boussinesq equation with $ u_0=\phi\in H^s(\mathbb R), u_1=\psi_x, \psi\in H^{s-1}(\mathbb R)$ for $s>-1/4,$ and \(b>1/2\).
This was an important application of Bourgain-type
spaces to the Boussinesq equation and demonstrated that the dispersive
structure permits local solutions at negative Sobolev regularity. The threshold was subsequently improved. Kishimoto and Tsugawa developed
refined Fourier-restriction estimates and obtained local well-posedness for
the one-dimensional good Boussinesq equation in $H^s(\mathbb{R})\times H^{s-2}(\mathbb{R})$ for $s>- 1/2$; see \cite{KishimotoTsugawa2010}. Their approach exploits a reduction to a
quadratic nonlinear Schrödinger-type problem and refined bilinear estimates
in suitably modified \(X^{s,b}\) spaces. The endpoint was reached by Kishimoto \cite{Kishimoto2013}, who proved sharp local well-posedness at $s=-1/2$, and ill-posedness below this regularity, both on \(\mathbb{R}\) and \(\mathbb{T}\).

The global theory has also received substantial attention. For the cubic defocusing good Boussinesq equation, Farah and Linares proved global well-posedness in $H^s(\mathbb{R})$ for $2/3<s<1,$ using the \(I\)-method and an almost-conservation law \cite{FarahLinares2010}. Other works have established global existence,
scattering, and blow-up results for generalized Boussinesq equations under
various assumptions on the nonlinearity and the initial data  \cite{sachs,liu93-0,liu95,liu2000, Linares1993,linaressci,liu97} .

These results demonstrate that the Fourier restriction norm method is
particularly effective for the good Boussinesq equation. However, the phase in those works is $\varphi_0(\xi)=\sqrt{|\xi|^4+|\xi|^2},$ and hence its low-frequency behavior differs from that of
\eqref{IVP} with the corresponding linear phase 
\begin{equation}\label{phase-function}
        \varphi(\xi)
    =
    \sqrt{|\xi|^4+\alpha|\xi|^2+\gamma}.
\end{equation} The function spaces and low-frequency analysis therefore
cannot simply be transferred to the present equation without modification.

  It is worth noting that mass $M$ oscillates periodically with angular frequency $\omega = \sqrt{\gamma}$ and period $T = \frac{2\pi}{\sqrt{\gamma}}$, indeed,
  \[
  M(t) = M_0 \cos(\sqrt{\gamma} t) + \frac{H_0}{\sqrt{\gamma}} \sin(\sqrt{\gamma} t),\qquad
   M_0 = \int_\R g(x) dx,\,\;H_0 = \int_\R h(x) dx .
  \]
  When $\gamma=0<\alpha$, if initial mass and initial mass flux are both zero, then the total mass remains constant for all $t$, while it is not conserved in time when $\gamma>0$. Derived from the strictly positive dispersion relation, the phase velocity $v_p(\xi)$ dictates the speed of individual wave crests and exhibits a singularity as $|\xi|\to 0$, scaling as $\mathcal{O}(|\xi|^{-1})$. While this suggests infinitely fast phase propagation for long waves, the physical transport of energy is strictly constrained by the group velocity, $v_g(\xi)$. In the long-wave limit, the group velocity vanishes  $v_g \to 0$, demonstrating that low-frequency energy remains spatially localized and oscillates harmonically at frequency $\sqrt{\gamma}$ rather than radiating outward. Conversely, in the short-wave regime  $|\xi| \to \infty$, the biharmonic operator $-\Delta^2 $ heavily dominates the dynamics. Both velocities grow unboundedly, with the group velocity asymptoting to $v_g \sim 2|\xi|$. This pronounced high-frequency divergence indicates that short-wavelength wave packets are highly dispersive, rapidly shedding energy away from the localized low-frequency modes and driving the long-time asymptotic smoothing of the solution.

 Previous works on the Boussinesq equation with linear restoring force have investigated global weak solutions, potential-well dynamics, finite-time blow-up, and solitary waves. Kolkovska and   Vassilev  \cite{KolkovskaVassilev2019} considered
Boussinesq and double-dispersion equations of the form
\begin{equation}
U_{tt}-U_{xx}-\beta_1U_{ttxx}
+\beta_2U_{xxxx}
+mU
=
(f(U))_{xx},
\end{equation}
with \(m>0\). The term $mU$ is the linear restoring force and physically, it appears in models of elastic rods or beams lying on an elastic foundation, where the
foundation pulls the displacement back toward equilibrium with a force proportional
to the displacement. Kutev, Kolkovska, and Dimova in 
\cite{KutevKolkovskaDimova2014} established global existence and finite-time
blow-up results for solutions with subcritical energy and investigated the
sign-preserving properties of the associated Nehari functional. In subsequent work, Kutev, Kolkovska, and Dimova obtained finite-time
blow-up criteria for the linear restoring force equation with combined
power nonlinearities. In particular, their sufficient conditions apply to
initial data having arbitrarily large positive energy. The proofs use a
concavity argument together with suitable sign-preserving functionals
\cite{KutevKolkovskaDimova2016}.  The solitary-wave problem for the linear restoring force Boussinesq equation
has also been studied. Numerical investigations based on Petviashvili-type
iterations have produced solitary-wave solutions and examined their
properties \cite{KolkovskaVassilev2019}. In contrast, the low-regularity dispersive analysis through Bourgain spaces has not been developed for the linear restoring force model. A corresponding bilinear and trilinear estimates associated with the phase \eqref{phase-function} are not
obtained. Hence, our purpose  is to develop a low-regularity theory for the
linear restoring force problem in function spaces adapted to its particular
low-frequency and dispersive structure.

The phase \eqref{phase-function} exhibits two distinct frequency regimes. At high frequency $\varphi(\xi)\sim |\xi|^2,
$ while, since \(\gamma>0\), $\varphi(\xi)\sim\sqrt{\gamma}$ as $|\xi|\to0.$ On contrast, in this case, for the standard equation, $\varphi_0(\xi)\sim |\xi|$
and therefore they have a different low-frequency behavior. It is also reflected in the
natural energy associated with the equation. Formally, one obtains an energy conservation law of the form
\[
\begin{aligned}
E(u,u_t)
=\frac12\|(-\Delta)^{-\frac{1}{2}}u_t\|_{L^2}^2
+\frac{\gamma}{2}\|(-\Delta)^{-\frac{1}{2}}u\|_{L^2}^2 +\frac{\alpha}{2}\|u\|_{L^2}^2
+\frac12\|\nabla u\|_{L^2}^2
+\int_{\mathbb R^n}F(u)\,dx .
\end{aligned}
\]
with $F'(u)=f(u).$  The appearance of $(-\Delta)^{-1/2}$ in functional indicates that the natural energy contains a
negative-order component at low frequencies.

This motivates us to use the modified Sobolev space and Bourgain-type spaces adapted to the modified phase \eqref{phase-function}, given by
\[ 
    \begin{split}
         \|f\|_{H^s_\omega}
    &= 
    \left\|
    \langle \xi\rangle^{s+\omega}
    |\xi|^{-\omega}
    \widehat f(\xi)
    \right\|_{L^2_\xi}, 
  \\
        \|u\|_{X^{s,b}_\omega}
    &= 
    \left\|
    \langle \xi\rangle^{s+\omega}
    |\xi|^{-\omega}
    \langle |\tau|-\varphi(\xi)\rangle^b
    \widetilde u(\xi,\tau)
    \right\|_{L^2_{\xi,\tau}},
    \end{split}
\]
where $\widetilde u(\xi,\tau)=\mathcal{F}_{x,t}u(\xi,\tau).$
The parameter \(s\) controls the high-frequency regularity, whereas
\(\omega\) controls the behavior at the origin. Indeed,
\[
\langle\xi\rangle^{s+\omega}|\xi|^{-\omega}
\sim |\xi|^s
\qquad\text{for }|\xi|\gg1,
\]
\[
\langle\xi\rangle^{s+\omega}|\xi|^{-\omega}
\sim |\xi|^{-\omega}
\qquad\text{for }|\xi|\ll1.
\]
The choice \(\omega=1\) is particularly natural from the point of view of
the energy. In fact, for \(s=1\),
\[
\|u\|_{H_1^1}^2
\sim
\int_{\mathbb{R}^n}
\left(
|\xi|^2+1+|\xi|^{-2}
\right)
|\widehat u(\xi)|^2\,d\xi,
\]
which simultaneously captures the \(H^1\)-component and the
\(\dot H^{-1}\)-component appearing in the energy functional. 

Our main results are stated in the following theorems. 
\begin{thm}[Quadratic case]
Let \(\frac{1}{2}<b<1\), \(0<T<1\), and let \(f(u)=\pm u^2\). Assume that
\[
-\frac12<\omega<2,\qquad s>0,  \qquad \text{if } n=1,
\]
\[
1-\frac n2<\omega<2,\qquad s>\frac n2-1,  \qquad \text{if } n\ge2.
\]
Then, the Cauchy problem \eqref{IVP} is locally
well-posed in $H^s_\omega(\mathbb{R}^n)\times H^{s-2}_\omega(\mathbb{R}^n).$
\end{thm}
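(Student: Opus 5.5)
We follow the standard Fourier restriction norm scheme, in the spirit of Farah's treatment of the good Boussinesq equation but with the phase $\varphi$ in \eqref{phase-function} and the weights of $X^{s,b}_\omega$. Write the problem as a first-order system with $\varphi(D)$. The solution of the linear problem is
\[
V(t)(g,h)=\cos(t\varphi(D))g+\frac{\sin(t\varphi(D))}{\varphi(D)}h .
\]
The Duhamel form is
\[
u(t)=V(t)(g,h)+\int_0^t \frac{\sin((t-t')\varphi(D))}{\varphi(D)}\,\Delta f(u)(t')\,dt'.
\]
Let $\psi_T$ be a smooth cutoff in time, and set $\Phi(u)=\psi_1(t)V(t)(g,h)+\psi_T(t)\int_0^t\cdots$. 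We aim to show that $\Phi$ is a contraction on a ball of $X^{s,b}_\omega$.

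The linear estimates come first.
\begin{itemize}
\item The homogeneous part satisfies $\|\psi_1 V(t)(g,h)\|_{X^{s,b}_\omega}\lesssim \|g\|_{H^s_\omega}+\|h\|_{H^{s-2}_\omega}$. This uses $\varphi(\xi)\sim\langle\xi\rangle^2$ uniformly, which holds because $\gamma>\alpha^2/4$ makes $|\xi|^4+\alpha|\xi|^2+\gamma$ bounded below by a positive constant. So dividing by $\varphi(D)$ gains exactly two derivatives and creates no low-frequency singularity, unlike the classical case.
\item For the inhomogeneous part, we split $\sin$ into $e^{\pm it\varphi}$ and apply the usual one-dimensional-in-time lemma (Ginibre--Tsutsumi--Velo type) to each half-wave $|\tau|\mp\varphi$. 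For $1/2<b<b'<1$ this gives
\[
\Big\|\psi_T\int_0^t\frac{\sin((t-t')\varphi(D))}{\varphi(D)}F\,dt'\Big\|_{X^{s,b}_\omega}\lesssim T^{1-b+b'-1}\big\|\varphi(D)^{-1}F\big\|_{X^{s,b'-1}_\omega}.
\]
\end{itemize}
Both estimates hold because the weight $\langle\xi\rangle^{s+\omega}|\xi|^{-\omega}$ is time independent and commutes with everything.

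With these, local well-posedness reduces to a bilinear estimate:
\[
\Big\|\frac{|\xi|^2}{\varphi(\xi)}\widetilde{uv}\Big\|_{X^{s,b'-1}_\omega}\lesssim \|u\|_{X^{s,b}_\omega}\|v\|_{X^{s,b}_\omega}.
\]
Since $|\xi|^2/\varphi(\xi)\lesssim \min(1,|\xi|^2)$, it suffices to bound
\[
\frac{\langle\xi\rangle^{s+\omega}|\xi|^{-\omega}\min(1,|\xi|^2)}{\langle\xi_1\rangle^{s+\omega}|\xi_1|^{-\omega}\langle\xi_2\rangle^{s+\omega}|\xi_2|^{-\omega}}
\]
times the modulation weights. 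By duality and Cauchy--Schwarz, this becomes the boundedness of the quantity
\[
\sup_{\xi,\tau}\frac{m(\xi,\xi_1)^2}{\langle\sigma\rangle^{2(1-b')}}\int\frac{d\xi_1\,d\tau_1}{\langle\sigma_1\rangle^{2b}\langle\sigma_2\rangle^{2b}},\qquad \sigma_j=|\tau_j|-\varphi(\xi_j),
\]
or of the symmetric version with the supremum over $(\xi_1,\tau_1)$, depending on which modulation is largest.

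We split into frequency regions.
\begin{itemize}
\item \emph{Low output $|\xi|\lesssim1$.} The factor $|\xi|^{2-\omega}$ is integrable against $d\xi$ near the origin exactly when $2-\omega>-n/2$. Combined with $\omega<2$, it absorbs the singular output weight. The condition $\omega<2$ is where the upper bound enters.
\item \emph{Low inputs $|\xi_j|\lesssim1$.} The input weights $|\xi_j|^{\omega}$ help when $\omega>0$. When $\omega<0$, we need $|\xi_j|^{2\omega}$ integrable in $n$ dimensions (after Cauchy--Schwarz, roughly $\int_{|\xi_1|<1}|\xi_1|^{2\omega}d\xi_1<\infty$ with room). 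This produces the lower bounds $\omega>-1/2$ for $n=1$ and $\omega>1-n/2$ for $n\ge2$. We expect to need a Hölder/Strichartz-type step here rather than the crude $L^2$ bound.
\item \emph{High--high and high--low.} Here the weights are $|\xi|^s$, and the $\tau_1$-integral gives $\langle \tau-\pm\varphi(\xi_1)\mp\varphi(\xi-\xi_1)\rangle^{-(4b-1)}$. We then change variables in $\xi_1$ using the resonance function $\varphi(\xi_1)\pm\varphi(\xi-\xi_1)$, whose gradient behaves like $2|\xi_1|$ or like $|\xi|$ in the various sign cases. For the $(+,-)$ interaction the resonance is $\approx 2\xi\cdot\xi_1-|\xi|^2$, and integration over the level sets costs $|\xi|^{-1}$ against an $(n-1)$-dimensional measure. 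Balancing this against $\langle\xi\rangle^{-s}$ yields $s>0$ for $n=1$ and $s>n/2-1$ for $n\ge2$.
\end{itemize}

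The main obstacle is the high-frequency, nearly resonant $(+,-)$ interaction, where the resonance function degenerates. We plan to handle it first by the Kenig--Ponce--Vega-style case analysis on the maximal modulation, $\max(|\sigma|,|\sigma_1|,|\sigma_2|)\gtrsim |\xi|\,|\xi_1|$. The $\langle\sigma_{\max}\rangle^{b'-1}$ or $\langle\sigma_{\max}\rangle^{-b}$ factor then gains powers of the frequencies. In that regime we use the bound $\varphi(\xi)=|\xi|^2+\alpha/2+O(|\xi|^{-2})$, so the phase is a perturbation of the Schrödinger phase.

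Once the bilinear estimate holds, the contraction argument with the factor $T^{b'-b}$ gives existence, uniqueness in $X^{s,b}_\omega$, and Lipschitz dependence on the data. Persistence in $C([0,T];H^s_\omega)$ follows from the embedding $X^{s,b}_\omega\subset C_tH^s_\omega$ for $b>1/2$. Recovering $u_t$ in $H^{s-2}_\omega$ uses the Duhamel formula together with $\varphi\sim\langle\xi\rangle^2$.
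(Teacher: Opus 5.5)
Your route differs from the paper's and can be made to work, but two steps of your sketch fail as written.

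\textbf{How the paper proceeds.} The paper never uses the output modulation. It bounds the Duhamel term through $\|v\|_{X^{s,b-1}_\omega([0,T])}\lesssim T^{1-b}\|v\|_{L^2_TH^s_\omega}$, which reduces matters to $\||D|^2\langle D\rangle^{-2}(u_1u_2)\|_{L^2_TH^s_\omega}\lesssim\|u_1\|_{X^{s,b}_\omega}\|u_2\|_{X^{s,b}_\omega}$. This is proved by duality and a Littlewood--Paley decomposition, with a physical-space bound on each dyadic block.
\begin{itemize}
\item Bernstein gives the factor $\min(\lambda_1,\lambda_2)^{n/2}$.
\item When the larger frequency is $\gg1$, frequency-localized Strichartz estimates improve this to $1$ for $n=1$ and to $\min(\lambda_1,\lambda_2)^{n/2-1+\delta}$ for $n\ge2$. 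These come from a high-frequency $|t|^{-n/2}$ dispersive bound and $TT^*$.
\item The conditions on $(s,\omega)$ then come only from summing dyadic weights.
\end{itemize}
In particular, $\omega>1-n/2$ arises because the Strichartz-improved factor is applied to a low-frequency input. The Bernstein factor would give $\omega>-n/2$, which is what your integrability heuristic actually yields. So that heuristic does not produce $1-n/2$, though this is harmless because your condition is implied by the hypotheses.

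\textbf{What each route buys.} Your Kenig--Ponce--Vega weighted-convolution scheme replaces Strichartz by the level-set geometry of $\varphi(\xi_1)\pm\varphi(\xi-\xi_1)$. It needs no dispersive estimate, only $\varphi\sim\langle\xi\rangle^2$ and $\varphi=|\xi|^2+\alpha/2+O(|\xi|^{-2})$. These hold for every $\gamma>\alpha^2/4$, whereas the paper's phase lemma is stated only for $(\alpha,\gamma)=(\pm1,1)$. In one dimension your scheme also leaves room to exploit modulation. The paper's route is shorter, works for every $b\in(\frac12,1)$, and is less lossy in $n\ge2$.

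\textbf{First failing step.} The bound $\max(|\sigma|,|\sigma_1|,|\sigma_2|)\gtrsim|\xi||\xi_1|$ holds only in one dimension. In the Schr\"odinger model, $\tau_1=|\xi_1|^2$ and $\tau_2=-|\xi_2|^2$ give $\tau=|\xi|^2-2\xi\cdot\xi_2$. So all three modulations vanish whenever $\xi\perp\xi_2$, however large $|\xi||\xi_1|$ is. For $n\ge2$ your plan for the ``main obstacle'' therefore has no mechanism. Fortunately no such lower bound is needed at $s>n/2-1$: the bare level-set count already gives $N^{n-2-2s}$ for high$\times$high$\to$high.

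\textbf{Second failing step.} This occurs in high$\times$high$\to$low, with $|\xi_1|\sim|\xi_2|\sim N\gg|\xi|$. There all modulations can be $O(1)$, so choosing the Cauchy--Schwarz arrangement by ``which modulation is largest'' decides nothing, and the supremum over the output $(\xi,\tau)$ is too lossy. For the $(+,-)$ interaction the $\xi_1$-integral is of size $N^{n-1}\min(N,|\xi|^{-1})$, with no modulation gain on the resonant slab. This gives:
\begin{itemize}
\item $N^{n-1-4s}$ at $|\xi|\sim1$, which is unbounded for $n=2$, $0<s<\frac14$;
\item $N^{2\omega+n-4-4s}$ at $|\xi|\sim N^{-1}$, which is unbounded for example when $n=1$, $\frac32<\omega<2$ and $s$ is small.
\end{itemize}
You must choose the arrangement by frequency: take the supremum over a high-frequency input and integrate over the low-frequency output. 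For $n=2$ with $1\ll|\xi|\ll N$ and small $s$, you additionally need a gain such as the coarea gain in $\xi$ from $\langle\sigma\rangle^{-2(1-b')}$, which forces $b'$ close to $\frac12$. Alternatively, use Strichartz in that region, as the paper does.
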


\begin{thm}[Cubic case]
Let \(\frac{1}{2}<b<1\), \(0<T<1\), and let \(f(u)=\pm u^3\). Assume that
\[
0<\omega<2,\qquad s>0, \qquad \text{if } n=1,
\]
\[
\max\left(-\frac n3,1-\frac n2\right)<\omega<2,
\qquad
s>\frac{n-1}{2}, \, \qquad \text{if } n\ge2.
\]
Then, the Cauchy problem \eqref{IVP} is locally
well-posed in $H^s_\omega(\mathbb{R}^n)\times H^{s-2}_\omega(\mathbb{R}^n).$
\end{thm}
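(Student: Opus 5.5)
The proof will be a contraction argument in $X^{s,b}_\omega$, following Farah's scheme for the good Boussinesq equation but with the phase $\varphi$ from \eqref{phase-function}.

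\textbf{Duhamel formulation.} Writing $A=\varphi(\sqrt{-\Delta})$, the solution of \eqref{IVP} satisfies
\[
u(t)=\cos(tA)g+\frac{\sin(tA)}{A}h+\int_0^t\frac{\sin((t-t')A)}{A}\,\Delta f(u(t'))\,dt'.
\]
Since $\gamma>\alpha^2/4$, we have $\varphi(\xi)\ge c>0$ for all $\xi$ and $\varphi(\xi)\sim\langle\xi\rangle^2$. Hence $A^{-1}$ costs exactly two derivatives at high frequency and is bounded at low frequency. Decomposing $\cos$ and $\sin$ into $e^{\pm itA}$, the standard localized estimates hold with $\psi_T$ a time cutoff:
\[
\|\psi_T u_{\mathrm{lin}}\|_{X^{s,b}_\omega}\lesssim\|g\|_{H^s_\omega}+\|h\|_{H^{s-2}_\omega},
\]
\[
\Big\|\psi_T\int_0^t\cdots\Big\|_{X^{s,b}_\omega}\lesssim T^{1-b+b'}\|A^{-1}\Delta f(u)\|_{X^{s,b'}_\omega},\qquad -\tfrac12<b'<0<b.
\]
These follow by the usual one-dimensional-in-time argument on each half $\tau\gtrless0$, since the weight $\langle\xi\rangle^{s+\omega}|\xi|^{-\omega}$ depends only on $\xi$. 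The symbol $|\xi|^2/\varphi(\xi)$ is bounded and behaves like $|\xi|^2$ near the origin. This gain of $|\xi|^2$ at low output frequency is what lets us absorb the singular weight $|\xi|^{-\omega}$, and it explains the upper bound $\omega<2$.

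\textbf{Bilinear estimate ($f=\pm u^2$).} The key estimate is
\[
\Big\|\frac{|\xi|^2}{\varphi(\xi)}\widetilde{(uv)}\Big\|_{X^{s,b'}_\omega}\lesssim\|u\|_{X^{s,b}_\omega}\|v\|_{X^{s,b}_\omega}.
\]
By duality and Plancherel it reduces to bounding a weighted convolution integral
\[
\int \frac{|\xi|^{2-\omega}\langle\xi\rangle^{s+\omega}}{\varphi(\xi)}\,
\frac{|\xi_1|^{\omega}|\xi_2|^{\omega}}{\langle\xi_1\rangle^{s+\omega}\langle\xi_2\rangle^{s+\omega}}\,
\frac{F_1F_2F_3}{\langle\sigma\rangle^{-b'}\langle\sigma_1\rangle^{b}\langle\sigma_2\rangle^{b}},
\]
with $\sigma_j=|\tau_j|-\varphi(\xi_j)$. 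I split into regions by frequency size:
\begin{itemize}
\item[(i)] all frequencies $\lesssim1$;
\item[(ii)] output frequency $|\xi|\lesssim1$ with $|\xi_1|\sim|\xi_2|\gg1$;
\item[(iii)] $|\xi|\gg1$.
\end{itemize}
In region (i) the multiplier is $|\xi|^{2-\omega}|\xi_1|^\omega|\xi_2|^\omega$. I would estimate it by Cauchy--Schwarz in $(\xi_1,\tau_1)$, integrating out $\tau_1$ via $\int\langle\tau_1-a\rangle^{-2b}\langle\tau-\tau_1-c\rangle^{-2b}\,d\tau_1\lesssim\langle\tau-a-c\rangle^{1-4b}$ and then bounding $\sup_{\xi}\int_{|\xi_1|\le1}|\xi|^{4-2\omega}|\xi_1|^{2\omega}|\xi-\xi_1|^{2\omega}\,d\xi_1$. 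This supremum is finite exactly when $2\omega>-n$, together with the analogous condition for the dual variables. I expect this local integrability computation, in dimension $n$, to produce the lower bounds $\omega>-\frac12$ (for $n=1$) and $\omega>1-\frac n2$, although the precise constraint may come from the dual configuration where $\xi_1$ is small.

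In regions (ii) and (iii) the weights are comparable to standard Sobolev weights, $|\xi|^2/\varphi\lesssim1$, and any small frequency carries the helpful factor $|\xi_j|^\omega$. Here I would use no resonance gain beyond Sobolev: after integrating in $\tau$ I bound $\sup_\xi\int\langle\xi_1\rangle^{-2s}\langle\xi-\xi_1\rangle^{-2s}\langle\xi\rangle^{2s}\,d\xi_1$, which is finite for $s>\frac n2-1$ once the derivative factor $|\xi|^2/\varphi$ is accounted for. When $n=1$ the resonance function $\sigma-\sigma_1-\sigma_2$, roughly of size $|\xi_1\xi_2|$ at high frequency, gives the room needed to reach $s>0$.

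\textbf{Trilinear estimate ($f=\pm u^3$).} I would treat the cubic term the same way: a trilinear convolution estimate whose low-frequency constraint becomes integrability of $|\xi_1|^{2\omega}|\xi_2|^{2\omega}|\xi_3|^{2\omega}$ over the two-fold convolution, giving $\omega>-\frac n3$. The high-frequency part needs $s>\frac{n-1}{2}$, obtained by applying the bilinear Sobolev-type bound twice. For $n=1$ I would use the $L^4_{t,x}$ Strichartz estimate for $e^{itA}$, which has dispersion of Schrödinger type at high frequency.

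\textbf{Conclusion and main obstacle.} The estimates above make the Duhamel map a contraction on a ball of $X^{s,b}_\omega$ for small $T$, giving existence, uniqueness and Lipschitz dependence, and $X^{s,b}_\omega\subset C([0,T];H^s_\omega)$ since $b>\frac12$. I expect the main obstacle to be the mixed low/high-frequency interactions, region (ii) and its analogues, where the singular weight $|\xi|^{-\omega}$ on a small output frequency meets high inputs. There I would try to exploit the cancellation factor $|\xi|^{2-\omega}$ together with the lower bound $\varphi\ge c$.
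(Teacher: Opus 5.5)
Your overall architecture matches the paper's: Duhamel formula, linear estimates in $X^{s,b}_\omega$ with a positive power of $T$, duality, splitting by frequency size, then contraction. Your low-frequency count is also right. The output weight $|\xi|^{2-\omega}$ forces $\omega<2$, and the homogeneity $6\omega$ of $|\xi_1|^{2\omega}|\xi_2|^{2\omega}|\xi_1+\xi_2|^{2\omega}$ on $\mathbb{R}^{2n}$ gives $\omega>-\frac n3$, exactly as in Case 1 of the paper.

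The gap is in the high-frequency part, which is where $s>\frac{n-1}{2}$ has to come from. At high frequency $|\xi|^2/\varphi(\xi)\to1$, so this factor gives no smoothing. Once the modulation weights are discarded, your Cauchy--Schwarz bound needs
\[
\sup_\xi\,\langle\xi\rangle^{2s}\int\langle\xi_1\rangle^{-2s}\langle\xi-\xi_1\rangle^{-2s}\,d\xi_1<\infty
\]
(or its two-fold analogue for $u^3$). This fails unless $s>\frac n2$: restrict to $|\xi_1|\le|\xi|/2$ and let $|\xi|\to\infty$. So the claim ``finite for $s>\frac n2-1$'' is false, and for $n\ge2$ a Sobolev-type argument cannot reach $\frac{n-1}{2}<s\le\frac n2$. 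In dyadic form, Bernstein alone gives the factor $(\lambda_{\min}\lambda_{\mathrm{med}})^{n/2}$. In the interaction $\lambda_1\sim\lambda_2\sim\lambda_3\sim\lambda_4\gg1$ this produces $\lambda_3^{\,n-2s}$. In addition, ``applying the bilinear bound twice'' is not well defined. That bound lands in $X^{s,b'}_\omega$ with $b'<0$ and uses the multiplier $|D|^2/\varphi(D)$. Neither is available when you feed the intermediate product $u_1u_2$ back in as an $X^{s,b}_\omega$ input.

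The missing ingredient is the dispersion of $e^{it\varphi(D)}$ at high frequency. The paper proves $\|S_\phi(t)P_\lambda f\|_{L^\infty_x}\lesssim|t|^{-n/2}\|f\|_{L^1_x}$ for $\lambda\gg1$, by stationary phase with $\phi'(r)\sim r$ and $\phi''(r)\sim1$ for $r\gtrsim1$. It derives localized Strichartz estimates by $TT^*$ and transfers them to $X^{0,b}_0$ for $b>\frac12$. It then places the three factors as follows:
\begin{itemize}
\item the highest-frequency factor in $L^q_TL^r_x$ with the near-endpoint pair $q=\frac{2}{1-\delta}$, $r=\frac{2n}{n-2+2\delta}$;
\item the middle factor in $L^\infty_TL^{qn/2}_x$;
\item the lowest factor in $L^\infty_{T,x}$, via Bernstein.
\end{itemize}
This improves the factor to $\lambda_{\min}^{n/2}\lambda_{\mathrm{med}}^{n/2-1+\delta}$. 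That gain of almost one power of $\lambda_{\mathrm{med}}$ turns $\lambda_3^{\,n-2s}$ into $\lambda_3^{\,n-1+2\delta-2s}$, and it is precisely the source of $s>\frac{n-1}{2}$.

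For $n=1$, the $L^4_{t,x}$ estimate you name is also too weak on its own. Combined only with $L^\infty_TL^2_x$ and Bernstein, it forces a loss of $\lambda^{1/2}$ in the high$\times$high$\times$high interaction (e.g. $L^\infty_{t,x}\times L^4_{t,x}\times L^4_{t,x}$), which gives only $s>\frac14$. The paper instead uses the admissible pair $(4,\infty)$, i.e. $L^4_TL^\infty_x$, on the two highest factors and $L^\infty_TL^2_x$ on the lowest. This is loss-free once $\lambda_{\mathrm{med}}\gg1$ and yields $s>0$.

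By contrast, the mixed low/high interactions you flag as the main obstacle are harmless. Bernstein on the low factor contributes $\lambda_{\min}^{n/2+\omega}$, which is summable over $\lambda_{\min}\lesssim1$ for $\omega>-\frac n2$.
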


The corresponding statements may be summarized as follows.

\begin{table}[h]
\centering
\begin{tabular}{c|c|c}
\hline
Nonlinearity & Dimension & Conditions \\
\hline
$f(u)=\pm u^2$
& $n=1$
& $-\frac12<\omega<2,\quad s>0$
\\[2mm]
$f(u)=\pm u^2$
& $n\ge2$
& $1-\frac n2<\omega<2,\quad s>\frac n2-1$
\\[2mm]
$f(u)=\pm u^3$
& $n=1$
& $0<\omega<2,\quad s>0$
\\[2mm]
$f(u)=\pm u^3$
& $n\ge2$
& $\max\{-\frac n3,1-\frac n2\}<\omega<2,
\quad s>\frac{n-1}{2}$
\\
\hline
\end{tabular}
\caption{Local well-posedness ranges.}
\label{intro:mainresults}
\end{table}

The proof proceeds in three steps. First, using Duhamel's principle, the solution to the equation \eqref{IVP} can be expressed in the following integral form
\begin{equation}\label{integral-form-sol}
u(t) = \partial_t \mathcal{W}_\phi(t) g + \mathcal{\omega} _\phi(t) h - \int_0^t \mathcal{\omega} _\phi(t - s) \Delta(f(u(s))) \, ds, 
\end{equation}
where the linear propagator defined as
\[
\mathcal{W}_\phi(t) = \frac{\sin(t \phi(D))}{\phi(D)} = \frac{e^{i t \phi(D)} - e^{-i t \phi(D)}}{2 i \phi(D)} = \frac{S_\phi(t) - S_\phi(-t)}{2 i \phi(D)}.
\]

Let \(\eta\in C_0^\infty(\mathbb R)\) be a smooth cutoff satisfying
\(\eta(t)=1\) for \(|t|\le1\), and define \(\eta_T(t)=\eta(t/T)\).
We define a map
\[
\Phi(u)(t)
=
\eta(t)\cos(t\phi(D))g
+
\eta(t)\frac{\sin(t\phi(D))}{\phi(D)}h
+
\eta_T(t)\int_0^t
\frac{\sin((t-\tau)\phi(D))}{\phi(D)}
\Delta(f(u(\tau)))\,d\tau .
\]
By the standard linear estimates in \(X_\omega^{s,b}\), we have
\[
\|\eta(t)\cos(t\phi(D))g\|_{X_\omega^{s,b}}
\lesssim
\|g\|_{H_\omega^s},
\]
and
\[
\left\|\eta(t)\frac{\sin(t\phi(D))}{\phi(D)}h\right\|_{X_\omega^{s,b}}
\lesssim
\|h\|_{H_\omega^{s-2}}.
\]
For the Duhamel term, using the standard \(X^{s,b}\) estimate and the equivalence
\(\phi(\xi)\sim \langle \xi\rangle^2\), we obtain
\[
\left\|
\eta_T(t)\int_0^t
\frac{\sin((t-\tau)\phi(D))}{\phi(D)}
\Delta(f(u(\tau)))\,d\tau
\right\|_{X_\omega^{s,b}}
\lesssim
\left\|
|D|^2\langle D\rangle^{-2}f(u)
\right\|_{X_\omega^{s,b-1}([0,T])}.
\]

To handle the nonlinearity in the last inequality, we need bilinear and trilinear estimates depending on the $f(u)$ in the adapted Bourgain spaces. The remainder of this paper is organized as follows. In Section \ref{section2}, we establish localized dispersive and Strichartz estimates for the modified linear group. In Section \ref{section3}, these bounds are used to prove the key bilinear and trilinear estimates in $X^{s,b}_\omega$. Finally,  a contraction mapping argument completes the proof of local well-posedness.

\section{Linear Estimates}\label{section2}

In this section, we will show that for high frequencies the phase \eqref{phase-function} satisfies the Schrödinger-type dispersive behavior. This motivates the localized
dispersive estimate, which then used to prove frequency-localized Strichartz estimates.

\begin{lemma} [Decay estimate for phase function]\label{phase-estimate} Consider the phase function of \eqref{IVP} given by $\phi(r)=\sqrt{r^4+\alpha r^2+\gamma}$ for $r=|\xi|.$ Then 
\[
\phi(r)\sim \langle r\rangle^2,
\qquad 
\langle r\rangle=(1+r^2)^{1/2}.
\]
In particular, this holds for \((\alpha,\gamma)=(1,1)\) and
\((\alpha,\gamma)=(-1,1)\).
Moreover,
\begin{enumerate}
\item If \(\alpha=1\) and \(\gamma=1\), then for all \(r\ge0\),
\[
\phi'(r)\sim r,
\qquad 
\phi''(r)\sim 1.
\]

\item If \(\alpha=-1\) and \(\gamma=1\), then for all \(r\ge1\),
\[
\phi'(r)\sim r,
\qquad 
\phi''(r)\sim 1,
\]
and for every integer \(j\ge3\),
\[
|\phi^{(j)}(r)|\le C_j r^{2-j}.
\]
\end{enumerate}
\end{lemma}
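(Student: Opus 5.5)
The plan is to work with the polynomial $P(r)=r^4+\alpha r^2+\gamma$ and to treat $\phi=\sqrt P$ by direct differentiation, splitting into low and high frequencies. For the equivalence $\phi\sim\langle r\rangle^2$, I first show that $P$ is bounded below by a positive constant, using $\gamma>\alpha^2/4$. Completing the square gives
\[
P(r)=\left(r^2+\frac{\alpha}{2}\right)^2+\gamma-\frac{\alpha^2}{4}\ge \gamma-\frac{\alpha^2}{4}>0 .
\]
Next I show that $P(r)\le C(1+r^2)^2$ for all $r$ and that $P(r)\ge c(1+r^2)^2$ for large $r$. The ratio $P(r)/(1+r^2)^2$ is continuous, positive, and tends to $1$ as $r\to\infty$. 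It is therefore bounded above and below on $[0,\infty)$, which gives $\phi\sim\langle r\rangle^2$. The cases $(\alpha,\gamma)=(\pm1,1)$ are covered because $1>1/4$.

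For the derivatives I compute
\[
\phi'=\frac{P'}{2\sqrt P}=\frac{r(2r^2+\alpha)}{\phi},\qquad
\phi''=\frac{2PP''-(P')^2}{4P^{3/2}} .
\]
In the case $\alpha=\gamma=1$, the quantity $2r^2+1$ is comparable to $\langle r\rangle^2$, and so is $\phi$. Hence $\phi'\sim r$ for all $r\ge0$. For $\phi''$ I expand the numerator $2PP''-(P')^2$ with $P'=4r^3+2r$ and $P''=12r^2+2$, which gives
\[
2PP''-(P')^2=8r^6+24r^4+24r^2+4 .
\]
This polynomial has positive coefficients and is therefore comparable to $\langle r\rangle^6$. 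Since $P^{3/2}\sim\langle r\rangle^6$, it follows that $\phi''\sim1$ on all of $[0,\infty)$.

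In the case $\alpha=-1$, $\gamma=1$, for $r\ge1$ we have $2r^2-1\ge r^2$, so $\phi'\sim r$. The numerator becomes
\[
2PP''-(P')^2=8r^6-24r^4+24r^2-4 .
\]
This is positive and comparable to $r^6$ on $r\ge1$; at $r=1$ it equals $4$, and I will check positivity by writing it as $8(r^2-1)^3+4$. This gives $\phi''\sim1$ for $r\ge1$.

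For the higher derivatives I write
\[
\phi(r)=r^2\sqrt{1-r^{-2}+r^{-4}}=r^2\,G(r^{-2}),
\]
where $G(x)=\sqrt{1-x+x^2}$ is smooth on $[0,1]$. I differentiate this product using Leibniz' rule and the chain rule. Each derivative of $G(r^{-2})$ of order $k$ is bounded by $C_k r^{-2-k}\le C_kr^{-k}$ on $r\ge1$, since $d/dr$ of $r^{-2}$ costs a factor $r^{-3}$. Each derivative of $r^2$ of order $m$ is bounded by $r^{2-m}$. Together these give $|\phi^{(j)}|\le C_jr^{2-j}$. The only delicate step is the sign of $2PP''-(P')^2$ in the case $\alpha=-1$ near $r=1$, which the cube identity above settles. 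The restriction to $r\ge1$ in that case is genuinely needed, because $\phi'$ vanishes at $r=1/\sqrt2$.
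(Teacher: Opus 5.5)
The paper states this lemma without proof; it is used directly in the localized dispersive estimate. So there is no competing argument to compare with, and your direct computation from $P=r^4+\alpha r^2+\gamma$ is a sound way to supply one. The following steps are all correct:
\begin{itemize}
\item the lower bound $P\ge\gamma-\alpha^2/4$ by completing the square;
\item the continuity/limit argument for $\phi\sim\langle r\rangle^2$;
\item the formulas $\phi'=r(2r^2+\alpha)/\phi$ and $\phi''=(2PP''-(P')^2)/(4P^{3/2})$;
\item the representation $\phi=r^2G(r^{-2})$ for $j\ge3$.
\end{itemize}

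\textbf{Arithmetic error.} There is a mistake in the numerator you call the only delicate step. In general
\[
2PP''-(P')^2=8r^6+12\alpha r^4+24\gamma r^2+4\alpha\gamma ,
\]
so the $r^4$ coefficient is $\pm12$, not $\pm24$. For $\alpha=\gamma=1$ you should get $8r^6+12r^4+24r^2+4$. There the slip is harmless, since all coefficients stay positive. For $\alpha=-1$, $\gamma=1$ you should get $8r^6-12r^4+24r^2-4$. Your identity $8(r^2-1)^3+4$ therefore settles the sign of the wrong polynomial. It gives the value $4$ at $r=1$, but $P(1)=1$, $P'(1)=2$, $P''(1)=10$ give $16$, i.e.\ $\phi''(1)=4$. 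The conclusion nevertheless survives, because the correct identity is
\[
8r^6-12r^4+24r^2-4=8(r^2-1)^3+12r^4+4 .
\]
This is $\ge16$ on $r\ge1$ and is $\ge r^6$ there. To see the last bound, use the cube term when $r^2\ge2$ and $12r^4+4\ge16\ge2r^6$ when $1\le r^2\le2$. So $\phi''\sim1$ on $r\ge1$ as claimed.

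\textbf{Minor point.} In the last step, the bound $C_kr^{-2-k}$ on the $k$-th derivative of $G(r^{-2})$ holds only for $k\ge1$. For $k=0$ the term is $O(1)$, which is what your subsequent bound $C_kr^{-k}$ actually uses.

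\textbf{Extension.} The general formula shows that $2PP''-(P')^2\sim r^6$ and $2r^2+\alpha\sim r^2$ for $r\ge R_0(\alpha,\gamma)$. Also $\phi=r^2\sqrt{1+\alpha r^{-2}+\gamma r^{-4}}$, with the square root smooth in $r^{-2}$ for $r$ large. Hence the high-frequency bounds $\phi'\sim r$, $\phi''\sim1$ and $|\phi^{(j)}|\lesssim r^{2-j}$ hold for every admissible $(\alpha,\gamma)$, not just $(\pm1,1)$. These are the only bounds used in the dispersive estimate with $\lambda\gg1$, and this generality is what the paper's main theorems implicitly require.
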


We recall several standard estimates that will be used throughout the paper. The first is the Bernstein  inequality, which provides a bound for derivatives of frequency-localized functions in terms of the corresponding Lebesgue norm.

\begin{lemma}[Bernstein inequality \cite{Grafakos2014}] \label{lem:bernstein-inequality}
For $1 \leq r \leq q \leq \infty$ and $k \geq 0$,
\[
\left\| D^k P_\lambda f \right\|_{L^q(\mathbb{R}^n)} \lesssim \lambda^{k + n\left(\frac{1}{r} - \frac{1}{q}\right)} \left\| P_\lambda f \right\|_{L^r(\mathbb{R}^n)}.
\]
\end{lemma}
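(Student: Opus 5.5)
The plan is to write $D^kP_\lambda f$ as a convolution of $P_\lambda f$ with an $L^1$-normalized rescaled kernel and then apply Young's inequality; the power of $\lambda$ will come entirely from scaling. Here $P_\lambda$ denotes the Littlewood--Paley projection to frequencies $|\xi|\sim\lambda$, with symbol $\psi(\xi/\lambda)$ and $\psi$ supported in an annulus $\{1/2\le|\xi|\le 2\}$. $D^k$ is the Fourier multiplier $|\xi|^k$.

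\emph{Step 1 (reproducing kernel).} Fix $\tilde\psi\in C_0^\infty(\mathbb{R}^n)$ supported in a slightly larger annulus $\{1/4\le|\xi|\le 4\}$, with $\tilde\psi\equiv1$ on $\operatorname{supp}\psi$. Then $\tilde\psi(\xi/\lambda)\psi(\xi/\lambda)=\psi(\xi/\lambda)$, so
\[
D^kP_\lambda f = K_\lambda * P_\lambda f,\qquad \widehat{K_\lambda}(\xi)=|\xi|^k\tilde\psi(\xi/\lambda).
\]
Set $m(\xi)=|\xi|^k\tilde\psi(\xi)$. Since $\tilde\psi$ vanishes near the origin, $m\in C_0^\infty$, hence $K:=\mathcal{F}^{-1}m$ is a Schwartz function. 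Moreover $\widehat{K_\lambda}(\xi)=\lambda^k m(\xi/\lambda)$, which gives
\[
K_\lambda(x)=\lambda^{k+n}K(\lambda x).
\]

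\emph{Step 2 (Young's inequality and scaling).} Define $p\in[1,\infty]$ by $1+\frac1q=\frac1p+\frac1r$; this is admissible because $r\le q$. Young's inequality gives
\[
\|D^kP_\lambda f\|_{L^q}\le \|K_\lambda\|_{L^p}\,\|P_\lambda f\|_{L^r}.
\]
By change of variables,
\[
\|K_\lambda\|_{L^p}=\lambda^{k+n-\frac np}\|K\|_{L^p}=\lambda^{k+n(\frac1r-\frac1q)}\|K\|_{L^p},
\]
because $1-\frac1p=\frac1r-\frac1q$. Since $K$ is Schwartz, $\|K\|_{L^p}$ is a finite constant depending only on $n,k,p$. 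This yields the claimed bound.

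\emph{Main obstacle.} The only delicate point is the smoothness of the symbol $m$, which is what guarantees $K\in L^p$ for every $p$. If $P_\lambda$ is a genuine annular projection, smoothness is automatic, as used above. If instead $P_\lambda$ denotes a low-frequency projection onto $|\xi|\lesssim\lambda$ and $k$ is not an even integer, then $|\xi|^k\tilde\chi(\xi)$ is not smooth at $0$. In that case I would argue directly that the kernel is still bounded and decays like $|x|^{-n-k}$; this follows by splitting dyadically near the origin and summing the resulting Schwartz pieces, each gaining a factor $2^{-jk}$. Consequently $K\in L^1\cap L^\infty$, and the same Young/scaling argument applies for $k>0$. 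For $k=0$ the symbol is smooth and the Schwartz argument applies directly.
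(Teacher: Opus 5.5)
Your proof is correct and is the standard argument: the paper gives no proof of its own and only cites Grafakos, where the inequality is proved the same way you do it. That is, one inserts a multiplier equal to $1$ on the frequency support, writes $D^kP_\lambda f$ as a convolution with $\lambda^{k+n}K(\lambda\,\cdot)$, and applies Young's inequality together with scaling. The paper's $P_\lambda$ is the annular projection with symbol $\rho_\lambda(|\xi|)$ supported in $\lambda/2\le|\xi|\le 2\lambda$, so your closing discussion of low-frequency projections with symbols that are non-smooth at the origin is not needed here.
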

We also recall the van der Corput lemma, which gives decay estimates for oscillatory integrals when the phase has a nonvanishing second derivative. 

\begin{lemma}[Van der Corput \cite{Stein1993}] \label{VanderCorput}
Assume $g \in C^1(a,b)$, $\psi \in C^2(a,b)$ and $|\psi''(r)| \geq A$ for all $r \in (a,b)$. Then
\begin{equation}
    \left| \int_a^b e^{it\psi(r)} g(r) \, dr \right|
\leq C(At)^{-1/2} \left[ |g(b)| + \int_a^b |g'(r)| \, dr \right],
\end{equation}
for some constant $C > 0$ that is independent of $a$, $b$ and $t$.
\end{lemma}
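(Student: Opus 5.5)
The plan is the classical two-step argument of \cite{Stein1993}: first treat the pure oscillatory integral $\int_\alpha^\beta e^{it\psi(r)}\,dr$ over an arbitrary subinterval $(\alpha,\beta)\subset(a,b)$, then insert the amplitude $g$ by one integration by parts. Throughout, $t>0$. Since $\psi''$ is continuous and $|\psi''|\ge A>0$, $\psi''$ has a fixed sign on $(a,b)$. Replacing $\psi$ by $-\psi$ (which conjugates the integral) we may assume $\psi''\ge A$. Then $\psi'$ is strictly increasing.

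\textbf{Step 1 (first-derivative lemma).} Suppose $\psi'$ is monotone on an interval $I=(\alpha,\beta)$ and $|\psi'|\ge\lambda>0$ there. Write $e^{it\psi}=\frac{1}{it\psi'}\frac{d}{dr}e^{it\psi}$ and integrate by parts. The boundary terms are bounded by $\frac{2}{t\lambda}$. The remaining integral is bounded by $\frac1t\int_I\bigl|\frac{d}{dr}(1/\psi')\bigr|\,dr$. Because $1/\psi'$ is monotone, this integral telescopes to $\frac1t\bigl|1/\psi'(\beta)-1/\psi'(\alpha)\bigr|\le\frac{1}{t\lambda}$. Hence $\bigl|\int_I e^{it\psi}\,dr\bigr|\le \frac{3}{t\lambda}$. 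Only finite endpoints are needed here, since we work on $(a,b)$ and may approximate from inside.

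\textbf{Step 2 (second-derivative bound with $g\equiv1$).} Since $\psi'$ is increasing, it vanishes at most once on $(a,b)$. Let $c$ be that zero, or the endpoint where $|\psi'|$ is minimal if there is none. Because $\psi''\ge A$, the mean value theorem gives $|\psi'(r)|\ge A\delta$ whenever $|r-c|\ge\delta$. Split $(\alpha,\beta)$ into the set $\{|r-c|<\delta\}$ and at most two complementary intervals. On the first set, bound the integral trivially by $2\delta$. On each complementary interval apply Step 1 with $\lambda=A\delta$. This gives
\[
\Bigl|\int_\alpha^\beta e^{it\psi}\,dr\Bigr|\le 2\delta+\frac{6}{tA\delta}.
\]
Choosing $\delta=(At)^{-1/2}$ yields the bound $C_0(At)^{-1/2}$, with $C_0$ absolute and independent of $\alpha,\beta,t$.

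\textbf{Step 3 (adding the amplitude).} Set $G(r)=\int_a^r e^{it\psi(s)}\,ds$, so that $|G(r)|\le C_0(At)^{-1/2}$ uniformly by Step 2. Then
\[
\int_a^b e^{it\psi}g\,dr = G(b)g(b)-\int_a^b G(r)g'(r)\,dr,
\]
using $G(a)=0$. The right-hand side is bounded by $C_0(At)^{-1/2}\bigl[|g(b)|+\int_a^b|g'|\bigr]$, which is exactly the statement.

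The only delicate point is Step 2, in particular the uniformity of the constant. This hinges on the monotonicity of $\psi'$: it makes the total-variation term in Step 1 telescope, so that no dependence on $\psi''$ beyond the lower bound $A$ enters. Everything else is routine.
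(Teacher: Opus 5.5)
Your proposal is correct and matches the source: the paper gives no proof of this lemma and only cites Stein's book, and your three steps are exactly Stein's argument, yielding $C=8$. Those steps are the first-derivative bound in which the variation of the monotone $1/\psi'$ telescopes, the split around the point where $|\psi'|$ is smallest with $\delta=(At)^{-1/2}$, and the final integration by parts against $G(r)=\int_a^r e^{it\psi(s)}\,ds$. The only implicit conventions, which the statement shares, are $t>0$ and that $g$ extends to $b$ (for instance, $g'\in L^1$ so the limit $g(b^-)$ exists); your remark about approximating from inside covers both.
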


Finally, we use a higher-order version of the oscillatory integral estimate, which yields rapid decay in the oscillation parameter under suitable uniform bounds on the derivatives of the amplitude and the reciprocal of the derivative of the phase. These estimates are stated below for convenience.
\begin{lemma}\label{whenvandercorputdoesnotworklemma} \cite{Stein1993}
Suppose that $g \in C_0^\infty(a,b)$ and $\psi \in C^\infty(a,b)$ with $|\psi'(r)| > 0$ for all $r \in (a,b)$. If
\begin{equation}
    \max_{a \leq r \leq b} \left| \frac{d^j}{dr^j} g(r) \right| \leq A, \quad
\max_{a \leq r \leq b} \left| \frac{d^j}{dr^j} \left( \frac{1}{\psi'(r)} \right) \right| \leq B
\end{equation}
for all $j = 0, \dots, N \in \mathbb{N}$, then
\begin{equation}
    \left| \int_a^b e^{it\psi(r)} g(r) \, dr \right| \lesssim AB^N |t|^{-N}.
\end{equation}
\end{lemma}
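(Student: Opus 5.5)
The plan is to prove the estimate by repeated integration by parts against the oscillating factor, the standard non-stationary phase argument. Since $\psi'$ does not vanish on $(a,b)$, the first-order operator
\[
L h = \frac{1}{it\,\psi'(r)}\,\frac{dh}{dr}
\]
satisfies $L\bigl(e^{it\psi}\bigr)=e^{it\psi}$. We may assume $t\neq0$, since otherwise there is nothing to prove. Because $g\in C_0^\infty(a,b)$, every boundary term produced by integrating by parts vanishes. Hence, for every $N$,
\[
\int_a^b e^{it\psi(r)}g(r)\,dr=\int_a^b e^{it\psi(r)}\,(L^{\top})^N g(r)\,dr ,
\qquad
L^{\top}h=-\frac{1}{it}\,\frac{d}{dr}\Bigl(\frac{h}{\psi'}\Bigr).
\]
Writing $\mathcal D h=\bigl(h\cdot(1/\psi')\bigr)'$, this reads $(L^{\top})^N g=(-1/(it))^N\mathcal D^N g$. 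The factor $|t|^{-N}$ is therefore already visible, and it remains to bound $\mathcal D^N g$ pointwise.

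The key step is a structural claim for $\mathcal D^N g$, proved by induction on $N$. The claim is that $\mathcal D^N g$ is a finite linear combination, with coefficients depending only on $N$, of terms
\[
g^{(m)}\prod_{i=1}^{N}\Bigl(\frac{1}{\psi'}\Bigr)^{(k_i)},\qquad m+k_1+\dots+k_N=N .
\]
Such a term has exactly one derivative of $g$ and exactly $N$ factors built from $1/\psi'$. For the induction step, apply $\mathcal D$ to a term of this form. Multiplying by $1/\psi'$ adds an $(N+1)$-st factor with $k_{N+1}=0$. Differentiating via Leibniz then raises exactly one of the indices $m,k_1,\dots,k_{N+1}$ by one, so the total order becomes $N+1$. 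This preserves the form at level $N+1$.

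Every index in such a term lies in $\{0,\dots,N\}$, so the hypotheses give $|g^{(m)}|\le A$ and $|(1/\psi')^{(k_i)}|\le B$. Consequently $|\mathcal D^N g(r)|\le C_N A B^N$ uniformly on $(a,b)$. Integrating over $(a,b)$ then yields
\[
\Bigl|\int_a^b e^{it\psi}g\,dr\Bigr|\le C_N (b-a)\,A B^N |t|^{-N}.
\]
The factor $(b-a)$ and $C_N$ are absorbed into the implicit constant in $\lesssim$, since in applications the interval is a fixed dyadic shell.

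The main obstacle is only the combinatorial bookkeeping in the induction, namely checking that the number of $1/\psi'$ factors and the total derivative count stay matched. Once that structure is established, the bound is immediate. I would also remark that the implicit constant depends on $N$ and on the length of $(a,b)$, which is harmless for the later frequency-localized dispersive estimates.
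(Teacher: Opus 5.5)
Your proof is correct and is the standard non-stationary-phase argument from Stein that the paper cites instead of proving the lemma: integrate by parts $N$ times with $L^{\top}h=-\frac{1}{it}\bigl(h/\psi'\bigr)'$ and bound $\mathcal D^N g$ term by term. Your remark that the implicit constant depends on $N$ and on $b-a$ is accurate and cannot be avoided in general, since the bound is not uniform in the interval length; this is harmless here because the paper only applies the lemma on the fixed shell $(1/2,2)$.
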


\medskip
To capture the different behavior of low and high frequencies, we use the Littlewood–Paley decomposition, which allows us to decompose a function into dyadic frequency components. We introduce the corresponding notation as follows.
Fix an even function \(\chi\in C_0^\infty(\mathbb{R})\) such that
\(0\le \chi\le 1\), \(\chi(s)=1\) for \(|s|\le 1\), and
\(\operatorname{supp}\chi\subset[-2,2]\). Set
\[
\rho(s):=\chi(s)-\chi(2s), 
\qquad 
\rho_\lambda(s):=\rho\left(\frac{s}{\lambda}\right),
\qquad \lambda\in 2^{\mathbb{Z}}.
\]
Then
\[
\operatorname{supp}\rho_\lambda
\subset \left\{s\in\mathbb{R}:\frac{\lambda}{2}\le |s|\le 2\lambda\right\}.
\]
We define the Littlewood--Paley projection \(P_\lambda\) by
\[
\widehat{P_\lambda f}(\xi)
=
\rho_\lambda(|\xi|)\widehat f(\xi),
\]
and write \(f_\lambda:=P_\lambda f\). Thus, throughout the paper, we use the dyadic decomposition 
\[
f=\sum_{\lambda\in 2^{\mathbb{Z}}} f_\lambda .
\]
This frequency localization will be essential for applying the localized dispersive estimate given below.

\begin{lemma}[Localized dispersive estimate]
    Suppose $n \geq 1$ and $\lambda \gg 1$. Then for every $f \in \mathcal{S}(\mathbb{R}^n)$, 
\begin{equation}\label{lem:loc-dispersive-estimate}
    \| S_\phi(t) P_\lambda f \|_{L^\infty_x(\mathbb{R}^n)} \lesssim 
    |t|^{-\frac{n}{2}} \| f \|_{L^1_x(\mathbb{R}^n)}.
\end{equation}
\end{lemma}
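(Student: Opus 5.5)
We would write the operator as a convolution, $S_\phi(t)P_\lambda f = K_\lambda(t,\cdot)*f$, with kernel
\[
K_\lambda(t,x)=\int_{\mathbb{R}^n} e^{i(x\cdot\xi + t\phi(|\xi|))}\rho_\lambda(|\xi|)\,d\xi .
\]
By Young's inequality it suffices to prove $\|K_\lambda(t,\cdot)\|_{L^\infty_x}\lesssim |t|^{-n/2}$, uniformly in $\lambda\gg1$. Since $\lambda\gg1$, the frequencies satisfy $|\xi|\sim\lambda\ge 1$ on the support. In this range Lemma~\ref{phase-estimate} gives $\phi'(r)\sim r$, $\phi''(r)\sim1$ and $|\phi^{(j)}(r)|\lesssim r^{2-j}$ (in both sign cases of $\alpha$). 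So the phase behaves like the Schrödinger phase $r^2$ there, and we expect the Schrödinger rate $|t|^{-n/2}$.

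For $n=1$ the estimate is direct. Split the integral into $\xi>0$ and $\xi<0$. On each half-line the phase $\psi(\xi)=x\xi+t\phi(|\xi|)$ satisfies $|\psi''|=|t\phi''|\gtrsim |t|$. The amplitude $\rho_\lambda$ has $\int|\rho_\lambda'|\lesssim1$ and vanishes at the endpoints. Van der Corput (Lemma~\ref{VanderCorput}) then gives $|K_\lambda|\lesssim |t|^{-1/2}$, uniformly in $x$ and $\lambda$.

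For $n\ge2$ we would treat the radial kernel in one of two ways.

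\textbf{Option (a), direct Hessian argument.} The Hessian of $\xi\mapsto t\phi(|\xi|)$ has eigenvalues $t\phi''(r)$ (radial direction) and $t\phi'(r)/r$ (tangential directions, multiplicity $n-1$). Both are $\sim|t|$ for $r\sim\lambda$. Hence the phase $x\cdot\xi+t\phi(|\xi|)$ has a uniformly nondegenerate Hessian of size $|t|$, and stationary phase in $n$ variables gives $|t|^{-n/2}$.

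\textbf{Option (b), polar coordinates.} This is the route closer to the tools stated in the paper. Write
\[
K_\lambda=\int_0^\infty e^{it\phi(r)}\rho_\lambda(r)r^{n-1}\widehat{d\sigma}(rx)\,dr,
\]
and use the asymptotics $\widehat{d\sigma}(y)=\sum_\pm e^{\pm i|y|}a_\pm(y)$ with $|\partial^k a_\pm(y)|\lesssim\langle y\rangle^{-(n-1)/2-k}$. Then split according to the size of $|x|$ relative to $|t|\lambda$:

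\begin{itemize}
\item \textbf{Nonstationary region}, $|x|\ll |t|\lambda$ or $|x|\gg|t|\lambda$. The full phase $t\phi(r)\pm r|x|$ has derivative $\gtrsim |t|\lambda + |x|$. Lemma~\ref{whenvandercorputdoesnotworklemma} with $N$ large, using the symbol bounds $|\phi^{(j)}|\lesssim r^{2-j}$ to control derivatives of $1/\psi'$, gives rapid decay $(|t|\lambda)^{-N}\lambda^{n}\lesssim |t|^{-n/2}$ once $|t|\lambda^2\gtrsim1$.
\item \textbf{Small times}, $|t|\lambda^2\lesssim 1$. Here the trivial bound $|K_\lambda|\lesssim\lambda^n\lesssim|t|^{-n/2}$ suffices.
\item \textbf{Stationary region}, $|x|\sim|t|\lambda$. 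Rescale $r=\lambda s$. Van der Corput with $|\psi''|\sim |t|\lambda^2$ gives
\[
|K_\lambda|\lesssim \lambda^{n}(|t|\lambda^2)^{-1/2}(\lambda|x|)^{-(n-1)/2}\sim \lambda^n(|t|\lambda^2)^{-n/2}=|t|^{-n/2},
\]
where the amplitude's total variation is controlled by the symbol bounds on $a_\pm$.
\end{itemize}

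The main obstacle is the bookkeeping in the stationary region for $n\ge2$. One must check that the $\widehat{d\sigma}$ decay factor $(\lambda|x|)^{-(n-1)/2}$ combines exactly with the one-dimensional van der Corput gain to give $|t|^{-n/2}$. One must also verify that the derivative bounds on $\phi$ from Lemma~\ref{phase-estimate} are uniform in $\lambda$ after rescaling, i.e. that $\lambda^{-2}\phi(\lambda s)$ has $C^N$ bounds and second derivative $\sim1$ on $s\in[1/2,2]$. This uniformity is where the assumption $\lambda\gg1$ (avoiding the low-frequency region where $\phi\approx\sqrt\gamma$ is flat) is essential.
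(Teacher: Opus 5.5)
Your Option (b) is essentially the paper's proof: Young's inequality, polar coordinates with the Bessel factor split into two oscillatory pieces, the nonstationary-phase lemma away from $|x|\sim|t|\lambda$, and van der Corput on that set, where the amplitude decay $(\lambda|x|)^{-(n-1)/2}\sim(|t|\lambda^2)^{-(n-1)/2}$ completes the count (your separate use of the trivial bound $\lambda^n$ when $|t|\lambda^2\lesssim1$ is what allows an integer $N\ge n/2$ elsewhere, a point the paper's choice $N=n/2$ glosses over). One slip to correct: after rescaling $r=\lambda s$ the phase derivative is $\gtrsim|t|\lambda^2$ while the amplitude and its $s$-derivatives are $O(\lambda^n)$, so the nonstationary bound is $\lambda^n(|t|\lambda^2)^{-N}$ rather than $(|t|\lambda)^{-N}\lambda^n$ (the latter is not $\lesssim|t|^{-n/2}$, e.g.\ when $|t|\lambda=1$), and it is the corrected form that your condition $|t|\lambda^2\gtrsim1$ actually uses.
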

\begin{proof}
Using   Fubini's theorem, we obtain
\begin{align*}
    [S_\phi(t) P_\lambda f](x) & = \int_{\R^n} I_\lambda (x-y, t) f(y) dy = \left ( I_\lambda (\cdot, t) * f \right ) (x),
\end{align*}
where 
    \begin{equation}
        I_\lambda(x,t)=\lambda^n \int_{\R^n} e^{i\lambda x \cdot \xi + i t\phi(\lambda\xi)}\rho(|\xi|) \; d\xi.
    \end{equation}
Therefore, by Young's inequality 
\begin{equation}
\| S_\phi(t) f_\lambda \|_{L_x^\infty(\mathbb{R}^n)} 
\leq \| I_\lambda(\cdot, t) \|_{L_x^\infty(\mathbb{R}^n)} 
\| f \|_{L_x^1(\mathbb{R}^n)}.
\end{equation}
Thus it remains to show that 
\begin{equation}\label{mid-goal}
\| I_\lambda(\cdot, t) \|_{L_x^\infty(\mathbb{R}^n)} 
\lesssim  |t| ^{-\frac{n}{2}}.
\end{equation}
Since $I(x, -t)=\overline{I(x,t)}$, it is enough to consider $t>0$. We prove only for $n\ge2$ and $\alpha=-1$, the cases $n=1$ and $\alpha=1$ are similar and simpler.

\noindent Using polar coordinates, we may write
\begin{equation}
I_\lambda(x,t) = \lambda^n \int_{\frac{1}{2}}^{2} e^{it\phi (\lambda r)} (\lambda r |x|)^{-\frac{n-2}{2}} J_{\frac{n-2}{2}}(\lambda r |x|) r^{n-1} \rho(r) \, \mathrm{d}r,
\end{equation}
where \( J_k(r) \) represents the Bessel function for $k>-\frac{1}{2}$.
We can represent the Bessel term using oscillatory components as follows
\begin{equation}\label{Bessel-decomposition}
s^{-\frac{n-2}{2}} J_{\frac{n-2}{2}}(s) = e^{is} h(s) + e^{-is} \overline{h}(s),
\end{equation}
where the function \( h(s) \) satisfies the decay property
\begin{equation}\label{bessel-decay}
\left| \frac{d^j}{ds^j} h(s) \right| \leq C_j \langle s \rangle^{-\frac{n-1}{2} - j} \quad \forall j \geq 0.
\end{equation}
For convenience, define
\[
\phi_\lambda(r) = \phi(\lambda r), \quad \widetilde{J}_a(r) = r^{-a} J_a(r), \quad \widetilde{\rho}(r) = r^{n-1} \rho(r),
\]
which allows us to simplify the \( I_\lambda(x,t) \) to the following form
\begin{equation}\label{simplified-integral}
I_\lambda(x,t) = \lambda^n \int_{1/2}^2 e^{it \phi_\lambda(r)} \widetilde{J}_{\frac{n-2}{2}}(\lambda r |x|) \widetilde{\rho}(r) \, dr.
\end{equation}
We now distinguish two cases.

\noindent \textbf{Case 1: \( \lambda |x| \lesssim 1 \).} 

\noindent By the Leibniz rule, for any $j \ge 0$,

\[
\partial_r^j \left[ \widetilde{J}_{\frac{n-2}{2}}(\lambda r|x|) \widetilde{\rho}(r) \right] = \sum_{k=0}^j \binom{j}{k} \partial_r^k \widetilde{J}_{\frac{n-2}{2}}(\lambda r|x|) \partial_r^{j-k} \widetilde{\rho}(r).
\]

\noindent First, consider $\partial_r^{j-k} \widetilde{\rho}(r)$, for $r \in \left(\frac{1}{2}, 2\right)$ we have $\rho(r) \in C_0^\infty\left( \frac{1}{2}, 2 \right).$ So each term of the following sum is bounded
\[
\partial_r^m \widetilde{\rho}(r) = \sum_{k=0}^m \binom{m}{k} \partial_r^k r^{n-1} \partial_r^{m-k} \rho(r).
\]
Hence,
\[
|\partial_r^m \widetilde{\rho}(r)| \mathop{\lesssim}\limits_{m} \: 1, \quad \forall r \in \left(\frac{1}{2}, 2\right).
\]
Next, consider $\partial_r^k \widetilde{J}_{\frac{n-2}{2}}(\lambda r|x|)$. Since $r > \frac{1}{2}$ and $\lambda |x| \leq 1$, using \eqref{Bessel-decomposition} and \eqref{bessel-decay}, we see that the Bessel function and its derivatives are uniformly bounded.
Therefore
\begin{equation}\label{bessel-derivative-bound}
\left| \partial_r^j \left[ \widetilde{J}_{\frac{n-2}{2}}(\lambda r |x|) \widetilde{\rho}(r) \right] \right| \mathop{\lesssim}\limits_{j} \: 1  \quad \forall j \geq 0.
\end{equation}
Therefore, Lemma \ref{phase-estimate} implies 
\begin{equation}\label{phi-derivative-bound}
\max_{1/2 \leq r \leq 1} \left| \frac{d^j}{dr^j} \left( \frac{1}{\phi'_\lambda(r)} \right) \right| \mathop{\lesssim}\limits_{j} \lambda^{-2} \quad \forall j \geq 0.
\end{equation}
Then, using Lemma \ref{whenvandercorputdoesnotworklemma} and combining the bounds from \eqref{bessel-derivative-bound} and \eqref{phi-derivative-bound}, along with the expression \eqref{simplified-integral} and the choice \( N = \frac{n}{2} \), we find:
\begin{equation}\label{case1}
|I_\lambda(x,t)| \lesssim \lambda^n \cdot \lambda^{-\frac{2n}{2}} t^{-\frac{n}{2}} \lesssim |t|^{-\frac{n}{2}}.
\end{equation}

\noindent \textbf{Case 2: \( \lambda |x| \gg 1 \).}

\noindent Since the Bessel function term admits the representation \eqref{Bessel-decomposition}, substituting into $I_\lambda$, we split the integral as
\[
I_\lambda(x,t) = I_\lambda^+(x,t) + I_\lambda^-(x,t),
\]
where
\[
I_\lambda^+(x,t) = \lambda^n \int_{1/2}^2 e^{it\theta_\lambda^+(r)} H_\lambda (|x|,r) \, dr,
\qquad
I_\lambda^-(x,t) = \lambda^n \int_{1/2}^2 e^{-it\theta_\lambda^-(r)} \bar{H}_\lambda (|x|,r) \, dr,
\]
with
\[
\theta_\lambda^\pm(r) := \frac{\lambda r |x|}{t}\pm \phi(\lambda r), \qquad H_\lambda(|x|, r) := h(\lambda r |x|)\widetilde{\rho}(r).
\]
Using \eqref{bessel-decay} and $\lambda |x| \gg 1$,
\[
\left| \partial_r^j h(\lambda r |x|) \right| \lesssim (\lambda |x|)^j \cdot (\lambda r |x|)^{-\frac{n-1}{2} - j}
\lesssim (\lambda |x|)^{-\frac{n-1}{2}}.
\]
Hence,
\begin{equation}\label{bound-H}
    \max_{\frac{1}{2} \leq r \leq 2} \left| \partial_r^j H_\lambda(|x|, r) \right|
\lesssim (\lambda |x|)^{-\frac{n-1}{2}}.
\end{equation}
Using the chain rule
\[
\partial_r \theta_\lambda^\pm(r) = \lambda \left (\frac{|x|}{t}\pm \phi'(\lambda r) \right ), \qquad
\partial_r^2 \theta_\lambda^\pm(r) = \pm \lambda^2 \phi''(\lambda r).
\]
From Lemma \ref{phase-estimate}, we know that \( \phi'(\lambda r) \sim \lambda \), and \( \phi''(\lambda r) \sim 1 \), uniformly for \( r \in (\frac{1}{2}, 2) \) and \( \lambda \gg 1 \). Therefore
\begin{equation}\label{bound-theta}
    |\partial_r \theta_\lambda^+(r)| \gtrsim \lambda^2, \qquad
|\partial_r^2 \theta_\lambda^\pm(r)| \sim \lambda^2.
\end{equation}

\noindent \textit{Estimate of \( I_\lambda^+(x,t) \)}. It is straightforward that
\begin{equation}\label{bound-derivative-theta}
    \max_{\frac{1}{2} \leq r \leq 2} \left| \partial_r^j \left( \frac{1}{\partial_r \theta_\lambda^+(r)} \right) \right| \mathop{\lesssim}\limits_{j} \: \lambda^{-2} \quad \forall j \geq 0. 
\end{equation}
Using Lemma \ref{whenvandercorputdoesnotworklemma} with \( N = \frac{n}{2} \), estimates \eqref{bound-H} and \eqref{bound-derivative-theta}, we get
\begin{equation}\label{case2-plus}
|I_\lambda^+(x,t)| 
\lesssim |t|^{-\frac{n}{2}}.
\end{equation}

\noindent \textit{Estimate of \( I_\lambda^-(x,t) \)}. Now, consider 
\[
\partial_r \theta_\lambda^-(r) = \lambda \frac{|x|}{t}- \lambda \phi'(\lambda r),
\]
since \( \phi'(\lambda r) \sim \lambda \), a stationary point can occur at $|x| \sim \lambda t.$
In this case, we apply Lemma \ref{VanderCorput}, using the second derivative from \eqref{bound-theta} and \eqref{bound-H} to obtain
\begin{equation}\label{case2-minus}
\begin{split}
|I_\lambda^-(x,t)|
&\lesssim \lambda^n (\lambda^2 t)^{-\frac{1}{2}} 
\left[
|H_\lambda^-(x,2)| + \int_{1/2}^2 \left| \partial_r H_\lambda^-(x,r) \right| \, dr
\right] \lesssim |t|^{-\frac{n}{2}}, 
\end{split}
\end{equation}
since \( H_\lambda^-(x,2) = 0 \).

\noindent It remains to consider the nonstationary cases, when $|x| \ll  \lambda t  \text{ or } |x| \gg \lambda t.$ In this case,
\[
|\partial_r \theta_\lambda^-(r)| = \left| \lambda \frac{|x|}{t} - \lambda \phi'(\lambda r) \right| \gtrsim \lambda^2.
\]
Therefore, exactly as in the estimate of \( I_\lambda^+(x,t) \), Lemma \ref{whenvandercorputdoesnotworklemma} yields
\[
|I_\lambda^-(x,t)| \lesssim |t|^{-\frac{n}{2}}. 
\]
\end{proof}

Now, we establish frequency-localized Strichartz estimates for the linear group $S_\phi(t)= e^{it\phi(D)}$, which will serve as a fundamental tool in the bilinear and trilinear estimates developed later.

\begin{lemma}[Localized Strichartz estimates]\label{lem:localized-strichartz}
Let $\lambda\gg1$, and assume that the pair $(q,r)$ satisfies
\begin{equation}\label{eq:2.26}
q>2,\quad r\ge2,
\quad\text{and}\quad
\frac2q+\frac n r=\frac n2.
\end{equation}
Then
\begin{equation}\label{strichartz-estimate}
\|S_{\phi}(t)f_\lambda\|_{L^q_tL^r_x(\mathbb{R}^{n+1})}
\;\lesssim\;\|f_\lambda\|_{L^2_x(\mathbb{R}^n)},
\end{equation}
for all $f\in\mathcal S(\mathbb{R}^n)$.  Moreover, if $b>\tfrac12$, then
\begin{equation}\label{strichartz-estimate-bourgain}
\|u_\lambda\|_{L^q_tL^r_x(\mathbb{R}^{n+1})}
\;\lesssim\;
\|u_\lambda\|_{X^{0,b}_\omega}.
\end{equation}
\end{lemma}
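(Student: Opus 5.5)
The plan is to derive \eqref{strichartz-estimate} from the localized dispersive estimate \eqref{lem:loc-dispersive-estimate} through the standard $TT^*$ argument, and then to obtain \eqref{strichartz-estimate-bourgain} by the transference principle.

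\textbf{Step 1 (energy and dispersive bounds).} Let $\tilde P_\lambda$ be a fattened projection, so that $\tilde P_\lambda P_\lambda=P_\lambda$, and set $T_\lambda f:=S_\phi(t)\tilde P_\lambda f$. Since $\phi$ is real, $S_\phi(t)$ is unitary on $L^2$. This gives the energy bound $\|S_\phi(t)\tilde P_\lambda f\|_{L^2_x}\lesssim\|f\|_{L^2_x}$. Replacing $\rho$ by the (still compactly supported, smooth, annular) fattened symbol in the proof of \eqref{lem:loc-dispersive-estimate} yields
\[
\|S_\phi(t)S_\phi(s)^*\tilde P_\lambda^2 F\|_{L^\infty_x}\lesssim|t-s|^{-n/2}\|F\|_{L^1_x}.
\]
Riesz--Thorin interpolation between these two bounds then gives
\[
\|S_\phi(t-s)\tilde P_\lambda^2F\|_{L^r_x}\lesssim|t-s|^{-n(\frac12-\frac1r)}\|F\|_{L^{r'}_x}.
\]

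\textbf{Step 2 ($TT^*$).} By duality it suffices to bound
\[
TT^*F(t)=\int_{\mathbb R}S_\phi(t-s)\tilde P_\lambda^2F(s)\,ds
\]
from $L^{q'}_tL^{r'}_x$ to $L^q_tL^r_x$. The admissibility condition \eqref{eq:2.26} gives $n(\tfrac12-\tfrac1r)=\tfrac2q<1$, because $q>2$. Hence the Hardy--Littlewood--Sobolev inequality in time applies to the kernel $|t-s|^{-2/q}$, and it maps $L^{q'}_t$ to $L^q_t$. The endpoint $q=2$ is excluded, so Keel--Tao is not needed. The constants depend only on the dispersive constant, which is uniform in $\lambda\gg1$. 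Applying the result to $f_\lambda=\tilde P_\lambda f_\lambda$ gives \eqref{strichartz-estimate}.

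\textbf{Step 3 (transfer to $X^{0,b}_\omega$).} Split $\widetilde u_\lambda$ according to the sign of $\tau$. The part with $\tau>0$ is modulated around $\tau=\varphi(\xi)$, and the part with $\tau<0$ around $\tau=-\varphi(\xi)$. For the first part, write
\[
u^+_\lambda(t)=\int_{\mathbb R}e^{it\sigma}S_\phi(t)f_\sigma\,d\sigma,
\qquad
\widehat{f_\sigma}(\xi)=\widetilde u_\lambda(\xi,\sigma+\varphi(\xi)).
\]
Minkowski's inequality and \eqref{strichartz-estimate} (the factor $e^{it\sigma}$ does not change the norm) give
\[
\|u_\lambda^+\|_{L^q_tL^r_x}\lesssim\int_{\mathbb R}\|f_\sigma\|_{L^2}\,d\sigma\le\Big(\int_{\mathbb R}\langle\sigma\rangle^{-2b}d\sigma\Big)^{1/2}\big\|\langle\sigma\rangle^b\|f_\sigma\|_{L^2}\big\|_{L^2_\sigma},
\]
and the first factor is finite because $b>\tfrac12$. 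The second factor equals the $\langle|\tau|-\varphi\rangle^b$-weighted $L^2$ norm. The negative-frequency part is handled in the same way with $S_\phi(-t)$, since the dispersive estimate also holds for negative $t$.

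\textbf{Step 4 (passing to $X^{0,b}_\omega$).} On the support $|\xi|\sim\lambda\gg1$ the weight satisfies $\langle\xi\rangle^{\omega}|\xi|^{-\omega}\sim1$. The $X^{0,b}$-type norm is therefore comparable to $\|u_\lambda\|_{X^{0,b}_\omega}$, which gives \eqref{strichartz-estimate-bourgain}.

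The main obstacle is Step 1. One has to check that the dispersive bound survives the fattened symbol $\tilde\rho^2$ uniformly in $\lambda$, and that the $TT^*$ kernel is exactly $S_\phi(t-s)$ on the localized piece. I would settle this by observing that the proof of \eqref{lem:loc-dispersive-estimate} used only the smoothness and annular support of $\rho$. Everything after that is routine.
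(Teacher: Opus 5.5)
Your proposal is correct and follows the paper's route: $TT^*$ built on the localized dispersive estimate, Riesz--Thorin interpolation, and Hardy--Littlewood--Sobolev in time, then transference to $X^{0,b}$. The fattened projection and the explicit splitting by the sign of $\tau$ make rigorous what the paper handles with $K_\lambda * h = S_\phi(t)P_\lambda h_\lambda$ and a citation to Tao. You do not treat the endpoint $(q,r)=(\infty,2)$ separately, where HLS does not apply, but the unitarity bound you record in Step 1 gives it directly.
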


\begin{proof}
In the case when $(q,r) = (\infty,2)$, Plancherel theorem yields the desired result.
Hence, from now on we assume $2 < q < \infty$. Let the linear solution operator defined by
\[
T f(t, x) = S_{\phi}(t) P_\lambda f(x).
\]
We will prove that
\[
T : L_x^2(\mathbb{R}^n) \to L_t^q L_x^r(\mathbb{R}^{n+1})
\]
is bounded using the $TT^*$ argument.
So, we prove the equivalent estimate
\begin{equation}\label{TT*-estimate}
    \|T T^* F\|_{L^q_t L^r_x(\R^{n+1})} \lesssim  \|F\|_{L^{q'}_t L^{r'}_x (\R^{n+1})}.
\end{equation}
The adjoint $T^*$ is defined by
\[
T^* F(x) = \int_{\mathbb{R}} \int_{\mathbb{R}^n} e^{-i(x \cdot \xi + s \phi(\xi))} \rho_\lambda(|\xi|) \hat{F}(s, \xi) \, d\xi \, ds.
\]
Applying \( T \) to \( T^* F \) and expressing it as a space-time convolution  
\begin{equation}\label{TT*-argument}
\begin{split}
(T T^* F)(t,x)
&= \int_{\mathbb{R}} \int_{\mathbb{R}^n} e^{i(x \cdot \xi + (t - s) \phi(\xi))} \rho_\lambda^2(|\xi|) \hat{F}(s, \xi) \, d\xi \, ds \\
&=\int_{\mathbb{R}} \left( K_\lambda(t - s, \cdot) * F(s, \cdot) \right)(x) \, ds,
\end{split}
\end{equation}
where the convolution kernel is
\[
K_\lambda(t, x) = \int_{\mathbb{R}^n} e^{i(x \cdot \xi + t \phi(\xi))} \rho_\lambda^2(|\xi|) \, d\xi.
\]
Note that 
\[
K_\lambda(x, t) * h(x)= S_{\phi}(t) P_\lambda h_\lambda (x).
\]
Then, \eqref{lem:loc-dispersive-estimate} implies that 
\begin{equation}\label{dis-estimate}
\|K_\lambda(t) * h\|_{L^\infty_x (\R^n)} \lesssim  (|t|)^{-\frac{n}{2}} \| h \|_{L^1_x(\mathbb{R}^n)}.
\end{equation}
Using the Plancherel identity, we also have
\begin{equation}\label{plancherel}
    \|K_\lambda(t) * h\|_{L^2_x} \lesssim \|h\|_{L^2_x (\R ^n)}.
\end{equation}
Applying the  Riesz-Thorin interpolation theorem to \eqref{dis-estimate} and \eqref{plancherel}, let $0 \leq \theta \leq 1$,
\[
\|K_\lambda(t,\cdot) * h\|_{L_x^p}
\lesssim |t|^{-\frac{n}{2}\theta}\|h\|_{L_x^{p'}},
\]
where
\[
\frac{1}{p}=\frac{1-\theta}{2},
\]
so we have $\theta = 1-\frac{2}{r}.$
Therefore
\begin{equation}\label{interpolation-mid-step}
    \|K_\lambda(t,\cdot) * h\|_{L_x^r}
\lesssim |t|^{-\frac{n}{2}\left(1-\frac{2}{r}\right)}\|h\|_{L_x^{r'}}.
\end{equation}
Using the admissibility condition, \eqref{interpolation-mid-step} becomes
\begin{equation}\label{interpolation}
    \|K_\lambda(t,\cdot) * h\|_{L_x^r}
\lesssim |t|^{-\frac{2}{q}}\|h\|_{L_x^{r'}}.
\end{equation}
Now we use the one-dimensional Hardy--Littlewood--Sobolev (HLS) inequality in the time variable. Since $q>2$, we have $0<\frac{2}{q}<1.$ Moreover, if we set $\gamma=\frac{2}{q}$, then
\[
\frac{1}{q'}
=1-\frac{1}{q}
=\frac{1}{q}+1-\gamma.
\]
Thus rhe HLS inequality is satisfied with $a=q, b=q',\gamma=\frac{2}{q}.$
Using the Minkowski’s inequality, \eqref{interpolation} and HLS, we get

\[
\begin{aligned}
\|TT^* F\|_{L^q_t L^r_x(\mathbb{R}^{n+1})}
&= \left\| \int_{\mathbb{R}} K_\lambda(t - s, \cdot) * F(s, \cdot) \, ds \right\|_{L^q_t L^r_x} \\
&\leq \left\| \int_{\mathbb{R}} \| K_\lambda(t - s, \cdot) * F(s, \cdot) \|_{L^r_x} \, ds \right\|_{L^q_t} \\
&\lesssim  \left\| \int_{\mathbb{R}} |t - s|^{-\frac{2}{q}} \|F(s, \cdot)\|_{L^{r'}_x(\mathbb{R}^n)} \, ds \right\|_{L^q_t(\mathbb{R})} \\
&=  \|F\|_{L^{q'}_t L^{r'}_x(\mathbb{R}^{n+1})}.
\end{aligned}
\]
Then \eqref{strichartz-estimate-bourgain} follows from \eqref{strichartz-estimate} by the same argument as in  \cite[Lemma 2.9]{tao2001multilinear}.
\end{proof}

\section{Multi-linear estimates}\label{section3}

In this section, we establish the bilinear and trilinear estimates needed to control the quadratic and cubic nonlinearities, respectively. The main tool is the frequency-localized Strichartz estimate proved in the previous section, combined with dyadic decompositions. These estimates are designed
to control the nonlinear term in the Duhamel formulation.

\begin{lemma}\label{lem:bilinear-estimate}
Let $n \geq 1$, $\frac{1}{2}< b < 1$, and $0 < T < 1$. Assume that
\[
-\frac{1}{2} < \omega < 2,\qquad s > 0 \quad \text{if } n=1,
\]
and
\[
1-\frac{n}{2} < \omega < 2,\qquad s > \frac{n}{2}-1 \quad \text{if } n \geq 2.
\]
Then
\begin{equation}\label{bilinear-estimate}
    \left\| |D|^2\langle D \rangle^{-2}(u_1 u_2) \right\|_{X^{s,b-1}_\omega (0,T)} \lesssim T^{1-b} \left\| u_1 \right\|_{X^{s,b}_\omega(0,T) } \left\| u_2 \right\|_{X^{s,b}_\omega (0,T)}.
\end{equation}
\end{lemma}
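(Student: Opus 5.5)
We prove \eqref{bilinear-estimate} by duality, reducing it to an estimate without modulation weights, and then use dyadic decomposition together with Lemma \ref{lem:localized-strichartz}. First we discard the negative power of the modulation on the left. Since $b-1<0$, and since restriction to $(0,T)$ lets us gain a factor of time, we use the standard localization estimate: for $0<b'<\frac12$ and functions supported in $[0,T]$,
\[
\|F\|_{X^{s,b-1}_\omega}\lesssim T^{1-b-\epsilon}\|F\|_{X^{s,-b'}_\omega}.
\]
Alternatively, we bound $\|F\|_{X^{s,b-1}_\omega(0,T)}\lesssim T^{1-b}\|F\|_{L^2_tH^s_\omega}$ directly, using $\langle\tau\mp\phi\rangle^{b-1}\le 1$ together with the time-localization lemma for $X^{0,b}$-type norms with $b<1$. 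This reduces the problem to proving
\[
\big\||D|^2\langle D\rangle^{-2}(u_1u_2)\big\|_{L^2_tH^s_\omega}\lesssim \|u_1\|_{X^{s,b}_\omega}\|u_2\|_{X^{s,b}_\omega}.
\]
This form is suitable for Strichartz and Sobolev arguments in physical space.

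Next we decompose $u_j=\sum_{\lambda_j}P_{\lambda_j}u_j$ and the output as $P_\mu(u_1u_2)$. The output multiplier has size $\langle\mu\rangle^{s+\omega}\mu^{-\omega}\mu^2\langle\mu\rangle^{-2}$. For $\mu\le1$ this is $\mu^{2-\omega}$, and the condition $\omega<2$ is exactly what makes this small factor summable over low output frequencies. For $\mu\gg1$ it is $\mu^s$. By symmetry assume $\lambda_1\ge\lambda_2$. There are three interactions.

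\emph{High--low} ($\mu\sim\lambda_1\gg1$, $\lambda_2\lesssim\lambda_1$). We put $u_{\lambda_1}$ in $L^\infty_tL^2_x$, which is bounded by $X^{0,b}$ since $b>\frac12$. We put $u_{\lambda_2}$ in $L^2_tL^\infty_x$ via Bernstein (Lemma \ref{lem:bernstein-inequality}), which gives $\|u_{\lambda_2}\|_{L^2_tL^\infty_x}\lesssim\lambda_2^{n/2}\|u_{\lambda_2}\|_{L^2_{t,x}}$. This yields a factor $\lambda_2^{n/2}$ against the weight $\langle\lambda_2\rangle^{-(s+\omega)}\lambda_2^{\omega}$ of $u_2$. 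For $\lambda_2\ge1$ the sum converges when $s>\frac n2$. To reach the sharper threshold $s>\frac n2-1$ (and $s>0$ when $n=1$) we must use the localized Strichartz estimate. We place $u_{\lambda_1}\in L^q_tL^r_x$ and $u_{\lambda_2}\in L^{q_2}_tL^{r_2}_x$ with admissible pairs from \eqref{eq:2.26}, so that the Bernstein loss is only $\lambda_2^{n/2-1-}$. For $\lambda_2<1$ we need $\lambda_2^{n/2+\omega}$ to be summable, which holds when $\omega>-\frac n2$. The stated lower bounds on $\omega$ are stronger than this and are forced by the other cases.

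\emph{High--high to low} ($\lambda_1\sim\lambda_2\gg\mu$). We use Bernstein on the output, $\|P_\mu(u_{\lambda_1}u_{\lambda_2})\|_{L^2_x}\lesssim\mu^{n/2}\|u_{\lambda_1}u_{\lambda_2}\|_{L^1_x}$. Combined with the output weight, this gives $\mu^{n/2+2-\omega}$ for $\mu<1$, which is summable since $\omega<2$. For $\mu\ge1$ it gives $\mu^{n/2+s}\lambda_1^{-2s}$, and summing over $\mu\le\lambda_1$ leaves $\lambda_1^{n/2-s}$. Here Strichartz estimates on both high factors must be used to improve the exponent. If $\lambda_1,\lambda_2<1$ as well, the weights $\lambda_j^{\omega}$ and the $L^\infty$ Bernstein bound $\lambda_j^{n/2}$ must cover the output factor. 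This produces the condition $\omega>1-\frac n2$ (respectively $\omega>-\frac12$ in one dimension), after using $\mu^{2-\omega}\lesssim\lambda^{2-\omega}$.

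\emph{Low--low} ($\lambda_1,\lambda_2\lesssim1$). This is handled purely by Bernstein, since all frequencies are bounded.

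The main obstacle is the high--high interaction at the threshold regularity. Lemma \ref{lem:localized-strichartz} gives only the Schrödinger-type gain $\frac2q+\frac nr=\frac n2$, and it applies only for $\lambda\gg1$. Closing the sum at $s>\frac n2-1$ therefore requires a careful choice of exponents. The plan is to apply H\"older in the form $L^2_tL^1_x\subset L^{q}_tL^{r}_x\cdot L^{q'}_tL^{r'}_x$, choosing an admissible pair for each high factor. If that is insufficient, we would use a bilinear refinement that exploits the transversality of the dispersion relation $|\xi|^2$ at high frequency. In all cases the dyadic sums are closed by Cauchy--Schwarz using the $\epsilon$-room in the strict inequalities on $s$ and $\omega$.
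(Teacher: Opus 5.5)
Your reduction to $L^2_TH^s_\omega$ via the time-localization estimate, the dyadic splitting, and your high--low argument essentially match the paper. In that argument one high factor goes into a Strichartz norm from Lemma~\ref{lem:localized-strichartz}, the low factor pays only about $\lambda_2^{\frac n2-1}$ through Bernstein, and the rough bound $\lambda_2^{\frac n2+\omega}$ is used when $\lambda_2\lesssim1$. The gap is the high--high$\to$low interaction with $1\ll\mu\ll\lambda_1\sim\lambda_2$. You flag this as the main obstacle, but the treatment you propose cannot work. Once Bernstein is applied to the output you have paid $\mu^{\frac n2}$. After that, nothing can improve $\|u_{\lambda_1}u_{\lambda_2}\|_{L^2_TL^1_x}$ beyond the energy bound $T^{\frac12}\|u_{\lambda_1}\|_{L^\infty_TL^2_x}\|u_{\lambda_2}\|_{L^\infty_TL^2_x}$. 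A H\"older splitting of $L^1_x$ into $L^{r_1}_x\cdot L^{r_2}_x$ with admissible $r_j\ge2$ forces $r_1=r_2=2$, hence $q_1=q_2=\infty$. Moreover, taking $u_{\lambda_2}=\overline{u_{\lambda_1}}$, for which $\|u_{\lambda_1}\overline{u_{\lambda_1}}\|_{L^1_x}=\|u_{\lambda_1}\|_{L^2_x}^2$, shows that no other argument can do better. This route therefore leaves $\mu^{\frac n2+s}\lambda_1^{-2s}$, i.e.\ $\lambda_1^{\frac n2-s}$ after summing over $\mu\ll\lambda_1$. That requires $s\ge\frac n2$ (in particular $s\ge\frac12$ when $n=1$), not $s>\frac n2-1$, resp.\ $s>0$. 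The bilinear transversality refinement you mention as a fallback is only named, not carried out, so this case is not proved as written.

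The missing observation is that for $\mu\gg1$ you should not use Bernstein on the output at all. Bound $\|P_\mu(u_{\lambda_1}u_{\lambda_2})\|_{L^2_{T,x}}\le\|u_{\lambda_1}u_{\lambda_2}\|_{L^2_{T,x}}$ and estimate the product exactly as in your high--low case. Put one factor in $L^q_TL^r_x$ with $q=\frac{2}{1-\delta}$, $r=\frac{2n}{n-2+2\delta}$, and the other in $L^\infty_TL^{qn/2}_x$ by Bernstein; this costs only $\lambda_1^{\frac n2-1+\delta}$. For $n=1$, the split $L^4_TL^\infty_x\times L^4_TL^2_x$ costs nothing. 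The weight becomes $\mu^s\lambda_1^{\frac n2-1+\delta-2s}$, and summing over $\mu\lesssim\lambda_1$ leaves $\lambda_1^{\frac n2-1+\delta-s}$, which closes for $s>\frac n2-1$ (resp.\ $s>0$). This is the paper's Case~2: the gain in Lemma~\ref{lem:loc-bilinear-estimate} depends only on $\min(\lambda_1,\lambda_2)$, so one bilinear bound serves both the high--low and high--high configurations, and output Bernstein is never used.

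A minor point: the configuration $\mu\lesssim\lambda_1\sim\lambda_2\lesssim1$ does not generate $\omega>1-\frac n2$; there one only needs $-\frac n2-2\le\omega<2$. In your argument the binding low-frequency condition is $\omega>-\frac n2$, coming from a low input $\lambda_2\lesssim1$ with $\lambda_2\ll\lambda_1$. The paper's $\omega>1-\frac n2$ for $n\ge2$ arises from its use of the improved bound for the low input in its Case~3. Since the lemma assumes the stronger condition, this misattribution does no harm.
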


The next lemma is the frequency-localized bilinear estimate. It will be used to prove Lemma \ref{lem:bilinear-estimate}.

\begin{lemma}\label{lem:loc-bilinear-estimate}
Let $n \geq 1$, $\frac{1}{2}< b < 1$, $0 < T < 1$ and $\lambda_1, \lambda_2, \lambda_3$ be dyadic numbers. Then
\begin{equation}\label{eq:biliner-local}
    \left\| |D|^2\langle D \rangle^{-2} P_{\lambda_3} \left( P_{\lambda_1} u_1 P_{\lambda_2} u_2 \right) \right\|_{L^2_T L^2_x} 
\lesssim \lambda_3^2 \langle \lambda_3 \rangle^{-2} B(\lambda) 
\left\| P_{\lambda_1} u_1 \right\|_{X^{0,b}_0} 
\left\| P_{\lambda_2} u_2 \right\|_{X^{0,b}_0},
\end{equation}
where
\begin{equation}\label{eq:bilinear-ball}
B(\lambda) \sim \left[ \min(\lambda_1, \lambda_2) \right]^{\frac{n}{2}}.
\end{equation}
Moreover, if $\max(\lambda_1, \lambda_2) \gg 1$, then improved $B$ is
\begin{equation}\label{eq:bilinear_ball_improved}
B(\lambda) \sim 
\begin{cases}
1 & \text{if } n = 1, \\
\left[ \min(\lambda_1, \lambda_2) \right]^{\frac{n}{2} - 1 + \delta}
 & \text{if } n \geq 2,
\end{cases}
\end{equation}
for sufficiently small $\delta > 0$.
\end{lemma}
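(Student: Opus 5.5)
The multiplier $|D|^2\langle D\rangle^{-2}P_{\lambda_3}$ acts only on the output frequency, and its symbol is comparable to $\lambda_3^2\langle\lambda_3\rangle^{-2}$ on the support of $\rho_{\lambda_3}$; it is bounded on $L^2_x$ with that norm. So I will first pull out the factor $\lambda_3^2\langle\lambda_3\rangle^{-2}$ by Plancherel and drop $P_{\lambda_3}$. It then remains to show
\[
\|P_{\lambda_1}u_1\,P_{\lambda_2}u_2\|_{L^2_TL^2_x}\lesssim B(\lambda)\,\|P_{\lambda_1}u_1\|_{X^{0,b}_0}\|P_{\lambda_2}u_2\|_{X^{0,b}_0}.
\]
By symmetry I assume $\lambda_1\le\lambda_2$. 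Since $X^{0,b}_0$ carries no spatial weight, Lemma \ref{lem:localized-strichartz} applies to it directly.

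\textbf{General bound.} I apply H\"older as $L^\infty_{t,x}\times L^2_{t,x}$. For the high-frequency factor, Plancherel and the embedding $X^{0,b}_0\hookrightarrow L^\infty_tL^2_x$ (valid since $b>1/2$) give
\[
\|P_{\lambda_2}u_2\|_{L^2_TL^2_x}\le T^{1/2}\|P_{\lambda_2}u_2\|_{L^\infty_tL^2_x}\lesssim \|P_{\lambda_2}u_2\|_{X^{0,b}_0}.
\]
For the low-frequency factor, Bernstein (Lemma \ref{lem:bernstein-inequality}) gives
\[
\|P_{\lambda_1}u_1\|_{L^\infty_x}\lesssim\lambda_1^{n/2}\|P_{\lambda_1}u_1\|_{L^2_x},
\]
and the $L^\infty_t$ norm is again controlled by $X^{0,b}_0$ with $b>1/2$. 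This yields $B(\lambda)\sim\lambda_1^{n/2}$ for all dyadic triples, including $\lambda\lesssim1$, where Strichartz is unavailable because the phase is flat there.

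\textbf{Improved bound when $\lambda_2\gg1$.} Now $P_{\lambda_2}u_2$ is in the high-frequency regime, so Lemma \ref{lem:localized-strichartz} applies to it. I distribute as $L^q_tL^r_x\times L^{\tilde q}_tL^{\tilde r}_x$ with $\tfrac1q+\tfrac1{\tilde q}=\tfrac12$ and $\tfrac1r+\tfrac1{\tilde r}=\tfrac12$, putting the admissible Strichartz norm on $u_2$ and Bernstein plus $L^\infty_t$ or Sobolev in time on $u_1$.

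For $n=1$, I take $(q,r)=(4,\infty)$ on $u_2$, which is admissible since $\tfrac24+0=\tfrac12$. On $u_1$ I use $L^4_tL^2_x$, bounded on $[0,T]$ by $T^{1/4}\|u_1\|_{L^\infty_tL^2_x}\lesssim\|u_1\|_{X^{0,b}_0}$. This gives $B=1$.

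For $n\ge2$, the endpoint $q=2$ is excluded, so I take $q=2+$, $r=\tfrac{2n}{n-2}-$ for $u_2$. Then $u_1$ must go into $L^{\infty-}_tL^{n-}_x$, and Bernstein from $L^2$ to $L^{n-}$ costs $\lambda_1^{n(\frac12-\frac1n)+}=\lambda_1^{\frac n2-1+\delta}$. The $L^{\tilde q}_t$ norm with $\tilde q<\infty$ is controlled on $[0,T]$ by $L^\infty_t$, hence by $X^{0,b}_0$. This requires $\lambda_1\gtrsim1$ so that the $\delta$-loss is a loss. If $\lambda_1\lesssim1$, the exponent $\tfrac n2-1+\delta$ is positive for $n\ge2$, and I would instead simply bound by $\lambda_1^{n/2}\le\lambda_1^{\frac n2-1+\delta}$ via the general bound.

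\textbf{Expected obstacle.} The main care point is the non-endpoint loss $\delta$ for $n\ge2$: making the exponents admissible with $q>2$, and confirming that the transference to $X^{0,b}$ (the last step of Lemma \ref{lem:localized-strichartz}) applies to $P_{\lambda_2}u_2$ with the $|\tau|$-phase. For the latter I would split $\widetilde u$ into the $\tau>0$ and $\tau<0$ parts and treat $e^{\pm it\phi(D)}$ separately.
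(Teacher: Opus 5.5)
Your proposal is correct and follows the paper's proof essentially step for step. You pull out $\lambda_3^2\langle\lambda_3\rangle^{-2}$ from the output multiplier. You get the rough bound from H\"older, Bernstein on the low-frequency factor, and $X^{0,b}_0\hookrightarrow L^\infty_tL^2_x$. You get the improved bound by putting the high-frequency factor in $L^4_tL^\infty_x$ for $n=1$, or in $L^q_tL^r_x$ with $q=\frac{2}{1-\delta}$, $r=\frac{2n}{n-2+2\delta}$ for $n\ge2$.

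Two minor remarks. For $n\ge2$ the space for $u_1$ should be $L^{\infty-}_tL^{n+}_x$ (the paper's $L^{qn/2}_x=L^{n/(1-\delta)}_x$), not $L^{n-}_x$; the Bernstein exponent $\frac n2-1+\delta$ you compute is the one for $L^{n+}_x$. Also, the case split on $\lambda_1$ is unnecessary, since Bernstein holds at every dyadic scale and only $\lambda_2\gg1$ is needed for Strichartz.
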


\begin{proof}
Set $F:=P_{\lambda_1}u_1\cdot P_{\lambda_2}u_2$. By the Bernstein inequality \ref{lem:bernstein-inequality}, 
\[
\bigl\|\,|D|^{2}\langle D\rangle^{-2} P_{\lambda_3}F\bigr\|_{L_T^2L_x^2}
\;\lesssim\; \lambda_3^{2}\langle \lambda_3\rangle^{-2}\, \|P_{\lambda_3}F\|_{L_T^2L_x^2}
\;\le\; \lambda_3^{2}\langle \lambda_3\rangle^{-2}\, \|F\|_{L_T^2L_x^2}.
\]
Thus it suffices to prove
\begin{equation}\label{bilinear-core}
  \|P_{\lambda_1}u_1 P_{\lambda_2}u_2\|_{L_T^2L_x^2}
  \ \lesssim\ B(\lambda)\,\|P_{\lambda_1}u_1\|_{X^{0,b}_0}\;\|P_{\lambda_2}u_2\|_{X^{0,b}_0}.
\end{equation}
Assuming $\lambda_1 \leq \lambda_2$, using the Hölder inequality
\[
  \|P_{\lambda_1}u_1 P_{\lambda_2}u_2\|_{L_T^2L_x^2}
  \ \le\ \|P_{\lambda_1} u_1\|_{L_T^2L_x^\infty}\;
        \|P_{\lambda_2}u_2\|_{L_T^\infty L_x^2}.
\]
Apply the Bernstein inequality (\ref{lem:bernstein-inequality}) to the first factor and Sobolev embedding to both,
then use $T^{\frac{1}{2}} < 1$
\[
\|P_{\lambda_1}u_1\|_{L_T^2L_x^\infty}
  \ \lesssim\ \lambda_1^{\frac{n}{2}}\,\|P_{\lambda_1}u_1\|_{L_T^2L_x^2}
  \ \lesssim\ \lambda_1^{\frac{n}{2}}\,\|P_{\lambda_1}u_1\|_{L_T^\infty L_x^2}
  \ \lesssim\ \lambda_1^{\frac{n}{2}}\,\|P_{\lambda_1}u_1\|_{X^{0,b}_0}.
\]
Hence
\[
  \|P_{\lambda_1}u_1 P_{\lambda_2}u_2\|_{L_T^2L_x^2}
  \ \lesssim\ \lambda_1^{\frac{n}{2}}\,
  \|P_{\lambda_1}u_1\|_{X^{0,b}_0}\;\|P_{\lambda_2}u_2\|_{X^{0,b}_0},
\]
which proves \eqref{eq:biliner-local} together with \eqref{eq:bilinear-ball} .

We now prove the improved bound under the additional assumption $\lambda_{2} \gg 1$. 

\noindent \textbf{Case $n=1$.} In one dimension the admissibility condition is $\tfrac{2}{q}+\tfrac{1}{r}=\tfrac12$. Choose $(q,r)=(4,\infty)$.
Then by Lemma \ref{lem:localized-strichartz} 
\[
\|P_{\lambda_{2}}u_{2}\|_{L_T^{4}L_x^{\infty}}
  \ \lesssim\ \|P_{\lambda_{2}}u_{2}\|_{X^{0,b}_0}.
\]
Using this and the Hölder inequality,
\begin{equation*}
\begin{split}
    \|P_{\lambda_1}u_1 P_{\lambda_2}u_2\|_{L_T^2L_x^2}
   & \ \le\ 
   \|P_{\lambda_{1}}u_{1}\|_{L_T^{4}L_x^{2}} \;
  \|P_{\lambda_{2}}u_{2}\|_{L_T^{4}L_x^{\infty}} \\
  & \ \lesssim\  T^{\frac{1}{4}} \|P_{\lambda_{1}}u_{1}\|_{L_T^\infty L_x^{2}} \;
  \|P_{\lambda_{2}}u_{2}\|_{L_T^{4}L_x^{\infty}} \\
  & \ \lesssim\  T^{\frac{1}{4}}   \|P_{\lambda_{1}}u_{1}\|_{X^{0,b}_0}  \|P_{\lambda_{2}}u_{2}\|_{X^{0,b}_0}.
\end{split}
\end{equation*}

\noindent \textbf{Case $n\ge2$.} Fix a small $\delta\in(0,\frac14]$ and define
\[
  r:=\frac{2n}{\,n-2+2\delta\,},\qquad
  q:=\frac{2}{1-\delta}.
\]
Then $(q,r)$ is Strichartz admissible and Lemma \ref{lem:localized-strichartz} provides
\[
  \|P_{\lambda_{2}}u_{2}\|_{L_T^{q}L_x^{r}}
  \ \lesssim\ 
  \|P_{\lambda_{2}}u_{2}\|_{X^{0,b}_0}.
\]
The Hölder inequality and the previous inequality yield
\begin{equation}\label{loc-bilin-mid-step}
    \begin{split}
\|P_{\lambda_1}u_1 P_{\lambda_2}u_2\|_{L_T^2L_x^2}
  \ \le\ T^{\frac{1}{2}-\frac{1}{q}} \ \|P_{\lambda_{1}}u_{1}\|_{L_T^\infty L_x^{\frac{qn}{2}}} \|P_{\lambda_{2}}u_{2}\|_{X^{0,b}_0}.
\end{split}
\end{equation}
Using the Bernstein inequality \ref{lem:bernstein-inequality} with Sobolev embedding,
\begin{equation}\label{loc-bilin-mid-step2}
      \|P_{\lambda_{1}}u_{1}\|_{L_T^\infty L_x^{\frac{qn}{2}}}
  \ \lesssim\ \lambda_{1}^{\,n\left(\frac12-\frac2{qn}\right)} \|P_{\lambda_{1}}u_{1}\|_{L_T^\infty L_x^2}
  \ =\ \lambda_{1}^{\,\frac n2-1+\delta}\,\|P_{\lambda_{1}}u_{1}\|_{X^{0,b}_0}.
\end{equation}
Substituting \eqref{loc-bilin-mid-step2} into \eqref{loc-bilin-mid-step}, and using $T<1$, for $n\ge2$ we get
\[
  \|P_{\lambda_1}u_1 P_{\lambda_2}u_2\|_{L_T^2L_x^2}
  \ \lesssim\ T^{\frac{\delta}{2}} \lambda_{1}^{\frac n2-1+\delta}\,
  \|P_{\lambda_1}u_1\|_{X^{0,b}_0}\ \|P_{\lambda_2}u_2\|_{X^{0,b}_0}. 
  \]
\end{proof}

To prove Lemma \ref{lem:bilinear-estimate}, we use the standard restriction-in-time estimate \cite{tao2006nonlinear}
\begin{equation}\label{tao-eq}
    \|v\|_{X_\omega^{s,b-1}([0,T])}
\lesssim
T^{1-b}\|v\|_{L^2([0,T];H_\omega^s)},
\qquad \frac{1}{2}<b<1,\ 0<T<1.
\end{equation}
Applying this to $v=|D|^2\langle D\rangle^{-2}(u_1u_2)$, it is enough to prove
\begin{equation}\label{step-1}
\bigl\||D|^2\langle D\rangle^{-2}(u_1u_2)\bigr\|_{L^2_T H^s_{\omega} }
\;\lesssim\;\|u_1\|_{X^{s,b}_{\omega} }\,\|u_2\|_{X^{s,b}_{\omega} ([0,T])}. 
\end{equation}

\medskip

Now, we are in a position to prove Lemma Lemma \ref{lem:bilinear-estimate}.
\begin{proof}
[Proof of Lemma \ref{lem:bilinear-estimate}] 
 
Using the definition of $H^s_\omega$ space and duality principle
\[
\Big\|\,|D|^{2-\omega}\langle D\rangle^{s+\omega-2}(u_1u_2)\,\Big\|_{L^2_{T}L^2_x}
=
\sup_{u_3 \in L^2_{t,x}}\frac{
\left|
\int_0^T\!\!\int_{\mathbb{R}^n}
|D|^{2-\omega}\langle D\rangle^{s+\omega-2}(u_1u_2)\,u_3
\,dx\,dt \right|}{\|u_3\|_{L^2_T L^2_x}}.
\]
Then, by definition of $X^{s,b}_\omega$ space we need to prove 
\begin{equation}\label{step-3}
\begin{split}
    &\sup \left|
\int_0^T\!\!\int_{\mathbb{R}^n}
 |D|^{2-\omega}\langle D\rangle^{s+\omega-2}
\big(|D|^{\omega}\langle D\rangle^{-s-\omega}u_1\, \cdot |D|^{\omega}\langle D\rangle^{-s-\omega}u_2\big)\,
u_3
\,dx\,dt \right|\\&\qquad
\lesssim
\|u_1\|_{X^{0,b}_0}\,
\|u_2\|_{X^{0,b}_0}
\|u_3\|_{L^2_T L^2_x}.
\end{split}
\end{equation}
Applying the Littlewood-Paley decomposition $u_j=\sum_{\lambda_j}P_{\lambda_j}u_j$ ($j=1,2,3$),  and using the Cauchy-Schwarz inequality, Lemma \ref{lem:loc-bilinear-estimate}, and the Bernstein inequality it suffices to show the last inequality
\begin{equation}
\begin{split}
& \sum_{\lambda_1, \lambda_2, \lambda_3}
\left|
\int_{0}^{T}\!\!\int_{\mathbb{R}^n}
\,|D|^{2-\omega}\,\langle D\rangle^{s+\omega-2}\,
P_{\lambda_3}\Big(|D|^{\omega}\langle D\rangle^{-s-\omega}P_{\lambda_1}u_1|D|^{\omega}\langle D\rangle^{-s-\omega}P_{\lambda_2}u_2\Big)P_{\lambda_3}u_3
\,dx\,dt
\right| \\
\ & \lesssim\ 
\sum_{\lambda_{1},\lambda_{2},\lambda_{3}}
\big\|\,|D|^{2-\omega}\langle D\rangle^{s+\omega-2}\,
P_{\lambda_3}\big(|D|^{\omega}\langle D\rangle^{-s-\omega}P_{\lambda_1}u_1 |D|^{\omega}\langle D\rangle^{-s-\omega}P_{\lambda_2}u_2\big)\big\|_{L^2_{T,x}} \|P_{\lambda_3}u_3\|_{L^2_{T,x}} \\
& \lesssim \underbrace{ \sum_{\lambda_{1},\lambda_{2},\lambda_{3}} G(\lambda)\,
a_{\lambda_1}\,a_{\lambda_2}\,b_{\lambda_3}}_{:=R}  \lesssim\ \|(a_{\lambda_1})\|_{l_{\lambda_1}^2}\,
\|(a_{\lambda_2})\|_{l_{\lambda_2}^2}\,\|(b_{\lambda_3})\|_{l_{\lambda_3}^2},
\end{split}
\end{equation}
where
\[
a_{\lambda_j}:=\|P_{\lambda_j}u_j\|_{X_\omega^{s,b}},\qquad
b_{\lambda_3}:=\|P_{\lambda_3}u_3\|_{L_T^2L_x^2},
\]
and
\[
G(\lambda)
:=B(\lambda)\lambda_3^{2-\omega}\langle\lambda_3\rangle^{s+\omega-2}
\prod_{j=1}^2 \lambda_j^\omega\langle\lambda_j\rangle^{-s-\omega} .
\]

By symmetry we may assume $\lambda_1\leq \lambda_2$. It implies that only the following interactions need to be considered
\begin{enumerate}
\item $\lambda_2\lesssim 1$;
\item $\lambda_2\gg 1$ and $\lambda_3\ll \lambda_2$, in which case $\lambda_1\sim \lambda_2$;
\item $\lambda_2\gg 1$ and $\lambda_3\sim \lambda_2$, in which case $\lambda_1 \le \lambda_2$.
\end{enumerate}

\textbf{Case 1: $\lambda_2\lesssim 1$}. Use the rough bound \eqref{eq:bilinear-ball} to obtain
\[
G(\lambda)\ \lesssim\ \lambda_1^{\frac{n}{2}+\omega}\lambda_2^\omega\lambda_3^{2-\omega}. \]
Hence, by the Cauchy--Schwarz inequality
\[
R \lesssim
\|(a_{\lambda_1})\|_{\ell^2_{\lambda_1}}
\|(a_{\lambda_2})\|_{\ell^2_{\lambda_2}}
\|(b_{\lambda_3})\|_{\ell^2_{\lambda_3}} ,
\]
provided
\[
-\frac{n}{2}<\omega<2.
\]

\textbf{Case 2: $\lambda_2\gg 1$ and $\lambda_3\ll \lambda_2$}. In this case $\lambda_1\sim \lambda_2$, which gives high$\times$high$\to$low interaction.

\textbf{$n=1$.} Then $B(\lambda)\sim 1$, and therefore
\[
G(\lambda)\sim \lambda_3^{2-\omega}\langle \lambda_3\rangle^{s+\omega-2}\lambda_2^{-2s}.
\]

\begin{itemize}
\item  If $\lambda_3 \lesssim 1$, then by Cauchy--Schwartz
\[
R \lesssim
\sum_{\lambda_3 \lesssim 1 \ll \lambda_1 \sim \lambda_2}
\lambda_3^{2-\omega} \,
\lambda_2^{-2s}
\, a_{\lambda_1} a_{\lambda_2} b_{\lambda_3}
\;\le\;
\|(a_{\lambda_1})\|_{\ell^2_{\lambda_1}}
\|(a_{\lambda_2})\|_{\ell^2_{\lambda_2}}
\|(b_{\lambda_3})\|_{\ell^2_{\lambda_3}}
\]
provided
\[
\omega<2,\qquad s\ge0.
\]
\item If $\lambda_3 \gg 1$, 

\[
R \lesssim 
\sum_{1 \ll \lambda_3 \ll \lambda_1 \sim \lambda_2}
\lambda_3^{s}\,
\lambda_2^{-2s}\,
a_{\lambda_1} a_{\lambda_2}\,b_{\lambda_3} 
\lesssim
\sum_{1 \ll \lambda_3 \ll \lambda_1 \sim \lambda_2} \lambda_3^{-\delta}
\lambda_2^{-s+\delta}\,
a_{\lambda_1} a_{\lambda_2}\,b_{\lambda_3} 
\]
for any small $\delta>0$, and this is summable whenever $s>0$. 
\end{itemize}

{$n\ge2$}. Using the improved factor \eqref{eq:bilinear_ball_improved} 
\[
G(\lambda)
\sim
\lambda_2^{\frac n2 - 2s - 1 + \delta}\,
 \lambda_3 ^{2-\omega}\,
\langle \lambda_3 \rangle^{s+\omega-2}.
\]

\begin{itemize}
\item If $\lambda_3 \lesssim 1 $, by the Cauchy--Schwarz inequality
\[
\begin{split}
R & \;\lesssim\;
\sum_{\lambda_3 \lesssim 1 \ll \lambda_1 \sim \lambda_2}
\lambda_3^{2-\omega}\,
\lambda_2^{\frac n2 - 2s- 1 + \delta}\,
a_{\lambda_1}\, a_{\lambda_2}\, b_{\lambda_3} \\
& \lesssim
\| (a_{\lambda_1}) \|_{\ell^2_{\lambda_1}}\,
\| (a_{\lambda_2}) \|_{\ell^2_{\lambda_2}}\,
\| (b_{\lambda_3}) \|_{\ell^2_{\lambda_3}}
\end{split}
\]
provided
\[
\omega<2,\qquad s>\frac{n-2}{4}+\frac{\delta}{2}.
\]

\item If $\lambda_3 \gg 1$,
\[
\begin{split}
R \; & \lesssim\;
\sum_{1 \ll \lambda_3 \ll \lambda_1 \sim \lambda_2}
\lambda_3^{s}\,
\lambda_2^{\frac n2 - 2s - 1 + \delta}\,
a_{\lambda_1}\, a_{\lambda_2}\, b_{\lambda_3} \\
& \sim
\sum_{1 \ll \lambda_3 \ll \lambda_1 \sim \lambda_2} \lambda_3^{-\delta}
\lambda_2^{\frac n2 - s - 1 + 2\delta}\,
a_{\lambda_1}\, a_{\lambda_2}\, b_{\lambda_3}  \\
& \lesssim
\| (a_{\lambda_1}) \|_{\ell^2_{\lambda_1}}\,
\| (a_{\lambda_2}) \|_{\ell^2_{\lambda_2}}\,
\| (b_{\lambda_3}) \|_{\ell^2_{\lambda_3}}
\end{split}
\]
provided $s> \frac{n }{2}-1+2\delta$.
\end{itemize}

\textbf{Case 3: $\lambda_2\gg 1$ and $\lambda_3\sim \lambda_2$}.  This is a low$\times$high$\to$high interaction.

$n=1$. 

\begin{itemize}
\item If $\lambda_1 \lesssim 1$, using \eqref{eq:bilinear-ball} get 
\[
R 
\;\lesssim\;
\sum_{\substack{\lambda_1 \lesssim 1 \\ 1 \ll \lambda_2 \sim \lambda_3}}
\lambda_1^{\frac12 + \omega}\,
a_{\lambda_1}\, a_{\lambda_2}\, b_{\lambda_3}
\]
which is summable, provided that $\omega > -\frac12.$

\item If $\lambda_1 \gg 1 $, 
\[
R
\;\lesssim\;
\sum
\lambda_1^{-s}\,
a_{\lambda_1}\, a_{\lambda_2}\, b_{\lambda_3} \lesssim \| (a_{\lambda_1}) \|_{\ell^2_{\lambda_1}}\,
\| (a_{\lambda_2}) \|_{\ell^2_{\lambda_2}}\,
\| (b_{\lambda_3}) \|_{\ell^2_{\lambda_3}}
\]
provided that $s > 0$.
\end{itemize}

$n\geq 2$.  In this case, we obtain $G(\lambda) \sim
\lambda_1^{\frac n2 - 1 + \delta + \omega}\,
\langle \lambda_1 \rangle^{-s-\omega}.$
\begin{itemize}
\item If $\lambda_1 \lesssim 1 $, 
\[
\begin{split}
R &
\;\lesssim\;
\sum_{\lambda_1 \lesssim 1 \ll \lambda_2 \sim \lambda_3}
\lambda_1^{\frac n2 - 1 + \delta + \omega}\,
a_{\lambda_1}\, a_{\lambda_2}\, b_{\lambda_3} \\
& \lesssim
\| (a_{\lambda_1}) \|_{\ell^2_{\lambda_1}}\,
\| (a_{\lambda_2}) \|_{\ell^2_{\lambda_2}}\,
\| (b_{\lambda_3}) \|_{\ell^2_{\lambda_3}}
\end{split} 
\]
requires $\omega>1-\frac n2-\delta.$

\item If $\lambda_1 \gg 1$,
\[
\begin{split}
R
& \;\lesssim\;
\sum_{1 \ll \lambda_1 \ll \lambda_2 \sim \lambda_3}
\lambda_1^{\frac n2 - 1 + \delta - s}\,
a_{\lambda_1}\, a_{\lambda_2}\, b_{\lambda_3} \\
& \lesssim
\| (a_{\lambda_1}) \|_{\ell^2_{\lambda_1}}\,
\| (a_{\lambda_2}) \|_{\ell^2_{\lambda_2}}\,
\| (b_{\lambda_3}) \|_{\ell^2_{\lambda_3}}
\end{split}
\]
provided $s>\frac n2-1+\delta.$
\end{itemize} \end{proof}

\begin{lemma}\label{lem:trilinear-estimate}
Let $n \geq 1$, $\frac{1}{2}< b < 1$, and $0 < T < 1$. Assume that
\[
0<\omega<2,\qquad s>0, \qquad \text{if } n=1,
\]
and
\[
\max \left(-\frac{n}{3},1-\frac n2\right)<\omega<2,\qquad s>\frac{n-1}{2}, \qquad \text{if } n\ge 2.
\]
Then
\begin{equation}\label{eq:trilinear-estimate}
\left\| |D|^2\langle D \rangle^{-2}(u_1 u_2 u_3) \right\|_{X^{s,b-1}_\omega(0,T)}
\lesssim
T^{1-b}
\prod_{j=1}^3 \|u_j\|_{X^{s,b}_\omega(0,T)}.
\end{equation}
\end{lemma}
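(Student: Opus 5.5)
We follow the scheme used for Lemma \ref{lem:bilinear-estimate}. First, the restriction-in-time estimate \eqref{tao-eq} reduces \eqref{eq:trilinear-estimate} to
\[
\bigl\||D|^2\langle D\rangle^{-2}(u_1u_2u_3)\bigr\|_{L^2_TH^s_\omega}\lesssim\prod_{j=1}^3\|u_j\|_{X^{s,b}_\omega}.
\]
By duality against $u_4\in L^2_TL^2_x$ this becomes a quadrilinear form. We then decompose each $u_j$ dyadically, $u_j=\sum_{\lambda_j}P_{\lambda_j}u_j$, and estimate
\[
\sum_{\lambda_1,\dots,\lambda_4} G_3(\lambda)\,a_{\lambda_1}a_{\lambda_2}a_{\lambda_3}b_{\lambda_4},
\qquad
G_3(\lambda)=B_3(\lambda)\,\lambda_4^{2-\omega}\langle\lambda_4\rangle^{s+\omega-2}\prod_{j=1}^3\lambda_j^{\omega}\langle\lambda_j\rangle^{-s-\omega}.
\]
The remaining task is to bound this sum by the product of the $\ell^2$ norms. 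By symmetry we assume $\lambda_1\le\lambda_2\le\lambda_3$.

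The first key step is a frequency-localized trilinear analogue of Lemma \ref{lem:loc-bilinear-estimate}, namely
\[
\|P_{\lambda_1}u_1P_{\lambda_2}u_2P_{\lambda_3}u_3\|_{L^2_TL^2_x}\lesssim B_3(\lambda)\prod_j\|P_{\lambda_j}u_j\|_{X^{0,b}_0}.
\]
The rough version places the two lowest factors in $L^\infty_tL^\infty_x$ via Bernstein and the highest in $L^\infty_tL^2_x$, giving $B_3\sim(\lambda_1\lambda_2)^{n/2}$. When $\lambda_3\gg1$ we put $P_{\lambda_3}u_3$ into the Strichartz space $L^q_tL^r_x$ of Lemma \ref{lem:localized-strichartz}. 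For $n=1$ take $(q,r)=(4,\infty)$. For $n\ge2$ take the near-endpoint pair $q=2/(1-\delta)$, $r=2n/(n-2+2\delta)$. The other two factors are handled by Hölder in time (gaining a power of $T$) and Bernstein. One Strichartz factor costs $\lambda^{n/2-1+\delta}$ and the other costs $\lambda^{n/2}$ from $L^2\to L^\infty$. If $\lambda_2\gg1$ as well, we instead use Strichartz on two factors, e.g. $L^4_tL^{2n/(n-1)}_x$-type pairs.

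The aim is an improved bound of the form $B_3\lesssim\lambda_1^{n/2}\lambda_2^{n/2-1+\delta}$, or the symmetric variant. The trade of $\lambda_1^{n/2}$ against $\lambda_2^{n/2-1}$ is what should produce the threshold $s>(n-1)/2$. With two high factors the exponent $\frac n2-1+\delta+\frac n2-s-s$ must be summable together with the weight from $\lambda_4$, which heuristically gives $2s>n-1$.

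The case analysis mirrors the bilinear proof. In Case 1 all $\lambda_j\lesssim1$ and the rough bound applies. This requires $\lambda_1^{n/2+\omega}\lambda_2^{n/2+\omega}\lambda_3^{\omega}\lambda_4^{2-\omega}$ to be summable. Since $\lambda_4\lesssim\lambda_3$, low-frequency summability should yield $\omega>-n/3$ after distributing exponents among the three inputs, together with $\omega<2$. Next come the high-high-high$\to$low and high-high$\to$high interactions. Finally there are the low-low-high$\to$high interactions, where $\lambda_1^{\omega+\cdots}$ must be summable near zero. The latter is where $\omega>1-\frac n2$ (for $n\ge2$) and $\omega>0$ (for $n=1$, where the Bernstein gain on the second low factor is lost) arise.

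The main obstacle is the localized trilinear estimate with the correct $B_3$ in each regime. The hardest configuration should be one high factor with two low ones, in the low-dimensional case $n=1$: with only one Strichartz factor, the remaining low factors must absorb the $|\xi|^{-\omega}$ weight through Bernstein. If the straightforward Hölder choice fails to give summability, I would try redistributing the Strichartz exponents, using the $L^q_tL^r_x$ bound on two factors with $1/q_1+1/q_2<1/2$, before resorting to finer case splits.
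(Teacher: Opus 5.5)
Your plan matches the paper's proof: reduction via \eqref{tao-eq}, duality against $u_4$, and Littlewood--Paley decomposition. It uses the same localized trilinear bound, with rough factor $(\lambda_1\lambda_2)^{n/2}$ and improved factor $\lambda_1^{n/2}\lambda_2^{\frac n2-1+\delta}$ from one near-endpoint Strichartz factor for $n\ge2$ (and, for $n=1$, factor $\lambda_1^{1/2}$, improved to $1$ via two $L^4_tL^\infty_x$ factors when $\lambda_{\mathrm{med}}\gg1$), followed by the same four-regime case analysis, whose conditions are implied by the hypotheses.

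One small correction: putting two factors in $L^4_tL^{2n/(n-1)}_x$ when $\lambda_2\gg1$ only closes H\"older for $n\le2$, since for $n\ge3$ the two spatial exponents already sum to $\frac{n-1}{n}>\frac12$; this is harmless, because the single-Strichartz bound suffices for all $n\ge2$.
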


\begin{lemma}\label{lem:loc-trilinear-estimate}
Let $n \geq 1$, $\frac{1}{2}< b < 1$, $0 < T < 1$, and let $\lambda_1,\lambda_2,\lambda_3,\lambda_4$ be dyadic numbers. Let
\[
\lambda_{\min}:=\min(\lambda_1,\lambda_2,\lambda_3), \qquad
\lambda_{\mathrm{med}}:=\operatorname{med}(\lambda_1,\lambda_2,\lambda_3), \qquad
\lambda_{\max}:=\max(\lambda_1,\lambda_2,\lambda_3).
\]
Then,
\begin{equation}\label{eq:loc-trilinear}
\left\| |D|^2\langle D \rangle^{-2} P_{\lambda_4}
\left( P_{\lambda_1} u_1 P_{\lambda_2} u_2 P_{\lambda_3} u_3 \right) \right\|_{L^2_T L^2_x}
\lesssim
\lambda_4^2 \langle \lambda_4 \rangle^{-2} B(\lambda)
\prod_{j=1}^3 \|P_{\lambda_j}u_j\|_{X^{0,b}_0},
\end{equation}
where
\begin{equation}\label{eq:tri-B-rough}
B(\lambda)\sim (\lambda_{\min}\lambda_{\mathrm{med}})^{\frac n2}.
\end{equation}
Moreover, if $\lambda_{\max}\gg1$, then one may improve $B$ to
\begin{equation}\label{eq:tri-B-improved}
B(\lambda)\lesssim
\begin{cases}
\lambda_{\min}^{\frac{1}{2}}, & \text{if } n=1, \\[4pt]
\lambda_{\min}^{\frac n2}\lambda_{\mathrm{med}}^{\frac n2-1+\delta}, & \text{if } n\ge2,
\end{cases}
\end{equation}
for sufficiently small $\delta>0$.
In addition, if $n=1$ and $\lambda_{\mathrm{med}}\gg1$, then one may further take
\begin{equation}\label{eq:tri-B-improved-1d-strong}
B(\lambda)\lesssim 1.
\end{equation}
\end{lemma}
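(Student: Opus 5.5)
We follow the argument used for Lemma \ref{lem:loc-bilinear-estimate}. First we strip off the outer multiplier. Set $F:=P_{\lambda_1}u_1P_{\lambda_2}u_2P_{\lambda_3}u_3$. Since the symbol of $|D|^2\langle D\rangle^{-2}P_{\lambda_4}$ is bounded by $\lambda_4^2\langle\lambda_4\rangle^{-2}$, Plancherel gives
\[
\bigl\||D|^2\langle D\rangle^{-2}P_{\lambda_4}F\bigr\|_{L^2_TL^2_x}\lesssim \lambda_4^2\langle\lambda_4\rangle^{-2}\|F\|_{L^2_TL^2_x}.
\]
It therefore suffices to show $\|F\|_{L^2_TL^2_x}\lesssim B(\lambda)\prod_j\|P_{\lambda_j}u_j\|_{X^{0,b}_0}$. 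By symmetry we relabel so that $\lambda_1\le\lambda_2\le\lambda_3$, i.e. $\lambda_1=\lambda_{\min}$, $\lambda_2=\lambda_{\mathrm{med}}$, $\lambda_3=\lambda_{\max}$. Throughout we use the embedding $X^{0,b}_0\hookrightarrow L^\infty_tL^2_x$, valid for $b>\frac12$, and $T<1$ to absorb powers of $T$.

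\textbf{Rough bound \eqref{eq:tri-B-rough}.} Apply H\"older in the form $L^\infty_{T,x}\cdot L^\infty_{T,x}\cdot L^2_TL^2_x$. Bernstein (Lemma \ref{lem:bernstein-inequality}) gives $\|P_{\lambda_j}u_j\|_{L^\infty_x}\lesssim\lambda_j^{n/2}\|P_{\lambda_j}u_j\|_{L^2_x}$ for $j=1,2$. Bounding the third factor by $T^{1/2}\|P_{\lambda_3}u_3\|_{L^\infty_TL^2_x}$ then yields $(\lambda_1\lambda_2)^{n/2}$.

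\textbf{Improved bound \eqref{eq:tri-B-improved} when $\lambda_{\max}\gg1$.} The highest frequency is placed in a Strichartz norm via \eqref{strichartz-estimate-bourgain}; this is legitimate because Lemma \ref{lem:localized-strichartz} needs only $\lambda\gg1$.
\begin{itemize}
\item For $n\ge2$, take the pair $(q,r)=\bigl(\frac{2}{1-\delta},\frac{2n}{n-2+2\delta}\bigr)$ from Lemma \ref{lem:loc-bilinear-estimate}. Estimate $\|F\|_{L^2_TL^2_x}\le\|P_{\lambda_1}u_1\|_{L^\infty_{T,x}}\|P_{\lambda_2}u_2\|_{L^\infty_TL^{qn/2}_x}\|P_{\lambda_3}u_3\|_{L^2_TL^r_x}$, noting $\frac{2}{qn}+\frac1r=\frac12$. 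Bernstein on the first factor gives $\lambda_1^{n/2}$. Bernstein on the second, exactly as in \eqref{loc-bilin-mid-step2}, gives $\lambda_2^{n/2-1+\delta}$. The third factor is at most $T^{1/2-1/q}\|P_{\lambda_3}u_3\|_{L^q_TL^r_x}$, which Strichartz controls.
\item For $n=1$, use the pair $(4,\infty)$. Estimate $\|F\|_{L^2_TL^2_x}\le\|P_{\lambda_1}u_1\|_{L^\infty_{T,x}}\|P_{\lambda_2}u_2\|_{L^4_TL^2_x}\|P_{\lambda_3}u_3\|_{L^4_TL^\infty_x}$. Bernstein gives $\lambda_1^{1/2}$, the middle factor is $\lesssim T^{1/4}\|P_{\lambda_2}u_2\|_{L^\infty_TL^2_x}$, and Strichartz handles the last.
\end{itemize}

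\textbf{Bound \eqref{eq:tri-B-improved-1d-strong} when $n=1$ and $\lambda_{\mathrm{med}}\gg1$.} Now both $P_{\lambda_2}u_2$ and $P_{\lambda_3}u_3$ lie in the high-frequency regime, so both can go into $L^4_TL^\infty_x$. H\"older with $L^\infty_TL^2_x\cdot L^4_TL^\infty_x\cdot L^4_TL^\infty_x$ (exponents $\frac14+\frac14=\frac12$ in time) then gives $B\lesssim1$ with no Bernstein loss.

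\textbf{Main obstacle.} The difficulty is mostly bookkeeping. One must check that every function placed in a Strichartz norm really has frequency $\gg1$, since the dispersive estimate was only proved there. One must also check that the H\"older exponents close: for $n\ge2$ this requires $\frac{qn}{2}\ge2$ and $r\ge2$, which hold for small $\delta$. A genuine subtlety is that \eqref{strichartz-estimate-bourgain} is stated on $\mathbb{R}^{n+1}$ while the left side is localized to $[0,T]$. We handle this as in the bilinear lemma, by restricting to $[0,T]$ after extending $u_j$ and taking infima over extensions.
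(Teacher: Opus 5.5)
Your proposal is correct and follows the paper's proof essentially line by line. It uses the same Plancherel reduction and the same H\"older--Bernstein split for the rough bound. It applies Strichartz with the same pairs $(4,\infty)$ and $\bigl(\frac{2}{1-\delta},\frac{2n}{n-2+2\delta}\bigr)$ to the frequency-$\gg1$ factors, and uses the same $L^\infty_TL^2_x\cdot L^4_TL^\infty_x\cdot L^4_TL^\infty_x$ split for \eqref{eq:tri-B-improved-1d-strong}. The only difference is cosmetic: for $n=1$ you put the $L^4_T$ norm (and the $T^{1/4}$ gain) on the middle factor rather than the lowest-frequency one, which changes nothing.
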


\begin{proof}
By symmetry, we may assume $\lambda_1\le \lambda_2\le \lambda_3.$ Set $F:=P_{\lambda_1}u_1\,P_{\lambda_2}u_2\,P_{\lambda_3}u_3.$
By the Bernstein inequality \eqref{lem:bernstein-inequality}, and the Plancherel  equality
\begin{equation*}
\bigl\|\,|D|^2\langle D\rangle^{-2}P_{\lambda_4}F\bigr\|_{L_T^2L_x^2}
\lesssim
\lambda_4^2\langle \lambda_4\rangle^{-2}\|F\|_{L_T^2L_x^2}.
\end{equation*}
Thus it suffices to prove
\begin{equation}\label{eq:tri-core}
\|P_{\lambda_1}u_1\,P_{\lambda_2}u_2\,P_{\lambda_3}u_3\|_{L_T^2L_x^2}
\lesssim
B(\lambda)\prod_{j=1}^3 \|P_{\lambda_j}u_j\|_{X^{0,b}_0}.
\end{equation}

By the H\"older  inequality
\[
\|P_{\lambda_1}u_1\,P_{\lambda_2}u_2\,P_{\lambda_3}u_3\|_{L_T^2L_x^2}
\le
T^{\frac{1}{2}}
\|P_{\lambda_1}u_1\|_{L_T^\infty L_x^\infty}
\|P_{\lambda_2}u_2\|_{L_T^\infty L_x^\infty}
\|P_{\lambda_3}u_3\|_{L_T^\infty L_x^2}.
\]
Applying the Bernstein inequality to the first two factors and Sobolev embedding in time to all three factors, we obtain
\[
\|P_{\lambda_j}u_j\|_{L_T^\infty L_x^\infty}
\lesssim \lambda_j^{\frac n2}\|P_{\lambda_j}u_j\|_{L_T^\infty L_x^2}
\lesssim \lambda_j^{\frac n2}\|P_{\lambda_j}u_j\|_{X^{0,b}_0},
\qquad j=1,2,
\]
Since $T<1$, it follows that
\[
\|P_{\lambda_1}u_1\,P_{\lambda_2}u_2\,P_{\lambda_3}u_3\|_{L_T^2L_x^2}
\lesssim
(\lambda_1\lambda_2)^{\frac n2}
\prod_{j=1}^3 \|P_{\lambda_j}u_j\|_{X^{0,b}_0},
\]
which proves \eqref{eq:tri-core} with \eqref{eq:tri-B-rough}.

We now prove the improved bounds under the additional assumption $\lambda_3\gg1$.

\noindent \textbf{Case $n=1$.}
Choose the admissible pair $(q,r)=(4,\infty)$. By Lemma \ref{lem:localized-strichartz},
\[
\|P_{\lambda_3}u_3\|_{L_T^4L_x^\infty}
\lesssim
\|P_{\lambda_3}u_3\|_{X^{0,b}_0}.
\]
Using the H\"older  inequality,
\[
\begin{split}
\|P_{\lambda_1} u_1
\, P_{\lambda_2} u_2
\, P_{\lambda_3} u_3
\|_{L_T^2 L_x^2}
& \lesssim
\|P_{\lambda_1}u_1\|_{L_T^4L_x^\infty}
\|P_{\lambda_2}u_2\|_{L_T^\infty L_x^2}
\|P_{\lambda_3}u_3\|_{L_T^4L_x^\infty}.
\end{split}
\]
Applying the Bernstein inequality to the first factor and Sobolev embedding to the remaining factors, we obtain
\[
\|P_{\lambda_1}u_1\|_{L_T^4 L_x^\infty}
\lesssim
T^{\frac{1}{4}}\|P_{\lambda_1}u_1\|_{L_T^\infty L_x^\infty}
\lesssim
\lambda_1^{\frac{1}{2}}\|P_{\lambda_1}u_1\|_{L_T^\infty L_x^2}
\lesssim
\lambda_1^{\frac{1}{2}}\|P_{\lambda_1}u_1\|_{X^{0,b}_0},
\]
Hence
\[
\|P_{\lambda_1}u_1\,P_{\lambda_2}u_2\,P_{\lambda_3}u_3\|_{L_T^2L_x^2}
\lesssim
\lambda_1^{\frac{1}{2}}
\prod_{j=1}^3 \|P_{\lambda_j}u_j\|_{X^{0,b}_0},
\]
which proves \eqref{eq:tri-B-improved} for $n=1$.

If, in addition, $\lambda_2\gg1$, then we may use Lemma \ref{lem:localized-strichartz} on $\lambda_2$ also. Indeed,
\[
\|P_{\lambda_1}u_1\,P_{\lambda_2}u_2\,P_{\lambda_3}u_3\|_{L_T^2L_x^2}
\le
\|P_{\lambda_1}u_1\|_{L_T^\infty L_x^2}
\|P_{\lambda_2}u_2\|_{L_T^4L_x^\infty}
\|P_{\lambda_3}u_3\|_{L_T^4L_x^\infty}
\lesssim
\prod_{j=1}^3 \|P_{\lambda_j}u_j\|_{X^{0,b}_0},
\]
which proves \eqref{eq:tri-B-improved-1d-strong}.

\noindent \textbf{Case $n\ge2$.}
Fix $\delta\in(0,\frac14]$, and define
\[
r:=\frac{2n}{n-2+2\delta},
\qquad
q:=\frac{2}{1-\delta}.
\]
Then $(q,r)$ is Strichartz admissible pair. Observe that $\frac{1}{2}
= \frac{1}{\infty}+\frac{2}{qn}+\frac{1}{r}.$
Therefore the H\"older  inequality, Lemma \ref{lem:localized-strichartz}, the Bernstein inequality, and Sobolev embedding in time yield 
\[
\begin{split}
\|P_{\lambda_1}u_1\,P_{\lambda_2}u_2\,P_{\lambda_3}u_3\|_{L_T^2L_x^2} &
\le
T^{\frac12-\frac1q}
\|P_{\lambda_1}u_1\|_{L_T^\infty L_x^\infty}
\|P_{\lambda_2}u_2\|_{L_T^\infty L_x^{\frac{qn}{2}}}
\|P_{\lambda_3}u_3\|_{L_T^qL_x^r} \\
& \lesssim \lambda_1^{\frac n2} \lambda_2^{\,n(\frac12-\frac{2}{qn})} \|P_{\lambda_1}u_1\|_{X^{0,b}_0} \|P_{\lambda_2}u_2\|_{L_T^\infty L_x^2} \|P_{\lambda_3}u_3\|_{X^{0,b}_0}\\
& \lesssim \lambda_1^{\frac n2}\lambda_2^{\frac n2-1+\delta}
\prod_{j=1}^3 \|P_{\lambda_j}u_j\|_{X^{0,b}_0},
\end{split}
\]
which proves \eqref{eq:tri-B-improved} when $n\ge2$.
\end{proof}

To prove Lemma \ref{lem:trilinear-estimate}, we again use the time-restriction estimate \eqref{tao-eq}. So, it is enough to prove
\begin{equation}\label{eq:tri-step-1}
\bigl\||D|^2\langle D\rangle^{-2}(u_1u_2u_3)\bigr\|_{L_T^2H_\omega^s}
\lesssim
\prod_{j=1}^3 \|u_j\|_{X_\omega^{s,b}([0,T])}.
\end{equation}

\medskip
\begin{proof}[Proof of Lemma \ref{lem:trilinear-estimate}]

  By duality, it suffices to show that
\begin{equation}\label{eq:tri-duality}
\Bigg|
\int_0^T\!\!\int_{\mathbb{R}^n}
|D|^{2-\omega}\langle D\rangle^{s+\omega-2}
\Bigl(\prod_{j=1}^3 |D|^{\omega}\langle D\rangle^{-s-\omega}u_j\Bigr)
u_4\,dx\,dt
\Bigg| \lesssim
\prod_{j=1}^3 \|u_j\|_{X^{0,b}_0}\,
\|u_4\|_{L_T^2L_x^2}.
\end{equation}
Apply the Littlewood--Paley decompositions
$u_j=\sum_{\lambda_j}P_{\lambda_j}u_j, \, j=1,2,3,4,$ and set
\[
a_{\lambda_j}:=\|P_{\lambda_j}u_j\|_{X^{0,b}_0}, \qquad j=1,2,3,
\qquad
b_{\lambda_4}:=\|P_{\lambda_4}u_4\|_{L_T^2L_x^2}.
\]
Using the Cauchy--Schwarz inequality, Lemma \ref{lem:loc-trilinear-estimate}, and the Bernstein inequality, it is enough to prove that
\begin{equation}\label{eq:tri-sum} 
\underbrace{ \sum_{\lambda_1,\lambda_2,\lambda_3,\lambda_4} G(\lambda)\, a_{\lambda_1}a_{\lambda_2}a_{\lambda_3}b_{\lambda_4}}_{:=R} 
\lesssim
\|(a_{\lambda_1})\|_{\ell^2_{\lambda_1}}
\|(a_{\lambda_2})\|_{\ell^2_{\lambda_2}}
\|(a_{\lambda_3})\|_{\ell^2_{\lambda_3}}
\|(b_{\lambda_4})\|_{\ell^2_{\lambda_4}},
\end{equation}
where
\[
G(\lambda)
:=
B(\lambda)\,
\lambda_4^{2-\omega}\langle\lambda_4\rangle^{s+\omega-2}
\prod_{j=1}^3 \lambda_j^\omega\langle\lambda_j\rangle^{-s-\omega}.
\]
 We may assume by symmetry that $\lambda_1\le \lambda_2\le \lambda_3.$ Moreover, $\lambda_4\lesssim \lambda_3.$

\textbf{Case 1: $\lambda_3\lesssim 1$}. Using the rough bound \eqref{eq:tri-B-rough}, we obtain
\[
G(\lambda)\lesssim
\lambda_4^{2-\omega}(\lambda_1\lambda_2)^{\frac n2} \prod_{j=1}^3 \lambda_j^\omega \lesssim \lambda_4^{2-\omega}(\lambda_1\lambda_2 \lambda_3)^{\frac n3} \prod_{j=1}^3 \lambda_j^\omega .
\]
Applying the Cauchy--Schwarz inequality, we obtain
\[
R\lesssim
\sum_{\lambda_1,\lambda_2,\lambda_3,\lambda_4\lesssim 1}
\lambda_4^{2-\omega}
\lambda_1^{\omega+\frac n3}
\lambda_2^{\omega+\frac n3}
\lambda_3^{\omega+\frac{n}{3}}
\prod_{j=1}^3 a_{\lambda_j}\, b_{\lambda_4}
\lesssim
\prod_{j=1}^3 \|(a_{\lambda_j})\|_{\ell^2_{\lambda_j}}
\|(b_{\lambda_4})\|_{\ell^2_{\lambda_4}},
\]
provided $-\frac{n}{3}<\omega<2.$

\textbf{Case 2: $\lambda_1\sim\lambda_2\sim\lambda_3\gg 1$}. 
\paragraph{$n=1$.}
Since $\lambda_{\mathrm{med}}\gg 1$, Lemma \ref{lem:loc-trilinear-estimate} gives
\[
G(\lambda)
\lesssim
\lambda_4^{2-\omega}\langle\lambda_4\rangle^{s+\omega-2}\lambda_3^{-3s}.
\]
\begin{itemize}
    \item If $\lambda_4\lesssim 1$, 
\[
G(\lambda)\lesssim \lambda_4^{2-\omega}\lambda_3^{-3s},
\]
and $R$ is summable provided that $\omega<2$ and $s \ge 0$.
\item If $\lambda_4\gg 1$, then for any sufficiently small
$\delta>0$,
\[
G(\lambda)
\lesssim
\lambda_4^{-\delta}\lambda_3^{-2s+\delta},
\]
and $R$ is again summable as soon as $s>0$.
\end{itemize}

\paragraph{$n\ge 2$.} Using the improved bound \eqref{eq:tri-B-improved}, we obtain
\[
G(\lambda)
\lesssim
\lambda_3^{n-1+\delta-3s}
\lambda_4^{2-\omega}\langle\lambda_4\rangle^{s+\omega-2}.
\]
 
\begin{itemize}
    \item   If $\lambda_4\lesssim 1$, then
    \[
R\lesssim
\sum_{\substack{\lambda_4\lesssim 1\\ \lambda_1\sim\lambda_2\sim\lambda_3}}
\lambda_4^{2-\omega}
\lambda_3^{\,n-1+\delta-3s}
\prod_{j=1}^3 a_{\lambda_j}\, b_{\lambda_4}
\lesssim
\prod_{j=1}^3 \|(a_{\lambda_j})\|_{\ell^2_{\lambda_j}}
\|(b_{\lambda_4})\|_{\ell^2_{\lambda_4}},
\]
 provided
\[
\omega<2,
\qquad
s>\frac{n-1}3+\frac\delta3.
\]
\item If $\lambda_4\gg 1$, then using $\lambda_4\lesssim \lambda_3$ we get
\[
R\lesssim
\sum_{\substack{\lambda_4\gg 1\\ \lambda_1\sim\lambda_2\sim\lambda_3\gg 1}}
\lambda_4^{s}
\lambda_3^{\,n-1+\delta-3s}
\prod_{j=1}^3 a_{\lambda_j}\, b_{\lambda_4} \; \lesssim \sum_{\substack{\lambda_4\gg 1\\ \lambda_1\sim\lambda_2\sim\lambda_3\gg 1}}
\lambda_4^{-\delta}\lambda_3^{n-1-2s+2\delta}
\prod_{j=1}^3 a_{\lambda_j}\, b_{\lambda_4},
\]
which is summable provided
\[
s>\frac{n-1}2+\delta.
\]
\end{itemize}

\textbf{Case 3: $\lambda_1\lesssim\lambda_2\ll\lambda_3$} which implies $\lambda_4\sim \lambda_3$.

\paragraph{$n=1$.} We divide into three subcases.

\begin{itemize}
    \item $\lambda_2\lesssim 1$. Then also $\lambda_1\lesssim 1$,  and $G(\lambda)
\lesssim
\lambda_1^{\frac14+\omega}\lambda_2^{\frac14+\omega}.$ 
Hence $R$ is summable whenever $\omega>-\tfrac14$.
    \item  $\lambda_1\lesssim 1\ll\lambda_2$. Since $\lambda_{\mathrm{med}}\gg 1$, we may use $B(\lambda)\lesssim 1$. Therefore $G(\lambda)
\lesssim
\lambda_1^{\omega}\lambda_2^{-s}.$ $R$ is summable if $\omega>0$ and $s>0$.
\item  $1\ll\lambda_1\le \lambda_2$.
Again $B(\lambda)\lesssim 1$, so $G(\lambda)
\lesssim
\lambda_1^{-s}\lambda_2^{-s},$ $R$ is summable if $s>0$.
\end{itemize}

\paragraph{$n\ge 2$.} Using the improved bound \eqref{eq:tri-B-improved}, 
\[
G(\lambda)\sim
\lambda_1^{\frac n2+\omega}\langle \lambda_1\rangle^{-s-\omega}
\lambda_2^{\frac n2-1+\delta+\omega}\langle \lambda_2\rangle^{-s-\omega}.
\]

\begin{itemize}
    \item $\lambda_2\lesssim 1$. Then, $R$ is summable provided $\omega>\frac{1-n-\delta}{2}.$
\item  $\lambda_1\lesssim 1\ll\lambda_2$. In this case $G(\lambda)
\lesssim
\lambda_1^{\frac n2+\omega}
\lambda_2^{\frac n2-1+\delta-s}, \text{ and } R$ is summable provided 
\[
\omega>-\frac n2,
\qquad
s>\frac n2-1+\delta.
\]
\item  $1\ll\lambda_1\le \lambda_2$. Then,
\[
R\lesssim
\sum_{\substack{1\ll \lambda_1\le\lambda_2\ll\lambda_3\\ \lambda_4\sim\lambda_3}}
\lambda_1^{\frac n2-s}
\lambda_2^{\frac n2-1+\delta-s}
\prod_{j=1}^3 a_{\lambda_j}\,b_{\lambda_4}
\lesssim
\prod_{j=1}^3 \|(a_{\lambda_j})\|_{\ell^2_{\lambda_j}}
\|(b_{\lambda_4})\|_{\ell^2_{\lambda_4}},
\]
provided that $s>\frac{n}{2}$.
If $s\le \frac{n}{2}$, we have 
\[
R\lesssim
\sum_{\substack{1\ll \lambda_1\le\lambda_2\ll\lambda_3\\ \lambda_4\sim\lambda_3}}
\lambda_1^{-\delta}
\lambda_2^{-2s+n -1+2\delta}
\prod_{j=1}^3 a_{\lambda_j}\,b_{\lambda_4}
\lesssim
\prod_{j=1}^3 \|(a_{\lambda_j})\|_{\ell^2_{\lambda_j}}
\|(b_{\lambda_4})\|_{\ell^2_{\lambda_4}},
\]
provided that $s>\frac{n-1}{2}+\delta.$
\end{itemize}

\textbf{Case 4: $\lambda_1\ll\lambda_2\sim\lambda_3$}. We distinguish two possibilities $\lambda_4\sim\lambda_3$ and $\lambda_4\ll\lambda_3$.

\paragraph{$n=1$ and $\lambda_4\sim\lambda_3$.} We use \eqref{eq:tri-B-improved-1d-strong} to get $G(\lambda)
\lesssim
\lambda_1^{\omega}\langle\lambda_1\rangle^{-s-\omega}\lambda_3^{-s}.$
\begin{itemize}
    \item If $\lambda_1\lesssim 1$, then $R$ is summable provided $\omega>0$ and $s>0$.
    \item If $\lambda_1\gg 1$, then $G(\lambda)\lesssim \lambda_1^{-s}\lambda_3^{-s},$ and $R$ is summable for $s>0$.
\end{itemize}

\paragraph{$n=1$ and $\lambda_4\ll\lambda_3$.} Again $B(\lambda)\lesssim 1$. 
\begin{itemize}
    \item If $\lambda_4\lesssim 1$, then $G(\lambda)
\lesssim
\lambda_1^{\omega}\langle\lambda_1\rangle^{-s-\omega}\lambda_4^{2-\omega}\lambda_3^{-2s}$ and $R$ is summable provided $0<\omega<2$ and $s>0$.
\item If $\lambda_4\gg 1$, then using $\lambda_4\ll\lambda_3$, for any sufficiently small $\delta>0$, we have $G(\lambda)
\lesssim
\lambda_1^{\omega}\langle\lambda_1\rangle^{-s-\omega}\lambda_4^{-\delta}\lambda_3^{-s+\delta},$ so $R$ is summable under the same assumptions.
\end{itemize}

\paragraph{$n\ge 2$ and $\lambda_4\sim\lambda_3$.} Using the improved bound \eqref{eq:tri-B-improved}, $G(\lambda)
\lesssim
\lambda_1^{\frac n2+\omega}\langle\lambda_1\rangle^{-s-\omega}
\lambda_3^{\frac n2-1+\delta-s}.$
\begin{itemize}
    \item If $\lambda_1\lesssim 1$, $R$ is summable provided $\omega>-\frac n2, s>\frac n2-1+\delta.$
\item If $\lambda_1\gg 1$, then
\[
R\lesssim
\sum_{1\ll \lambda_1\ll \lambda_3\sim \lambda_4}
\lambda_1^{\frac n2-s}\lambda_3^{\frac n2-1+\delta-s}
\prod_{j=1}^3 a_{\lambda_j}\, b_{\lambda_4}.
\]
summable for $s>\frac n2$. If $s\le \frac n2$, then using $\lambda_1\ll \lambda_3$ we get $G(\lambda)\lesssim
\lambda_1^{-\delta}\lambda_3^{\,n-1+2\delta-2s}.$ $R$ is summable provided $s>\frac n2-\frac12+\delta.$
\end{itemize}

\paragraph{$n\ge 2$ and $\lambda_4\ll\lambda_3$.} In this case
\[
G(\lambda)
\lesssim
\lambda_1^{\frac n2+\omega}\langle\lambda_1\rangle^{-s-\omega}
\lambda_3^{\frac n2-1+\delta-2s}
\lambda_4^{2-\omega}\langle\lambda_4\rangle^{s+\omega-2}.
\]
We distinguish four subcases.
\begin{itemize}
    \item $\lambda_4\lesssim 1$ and $\lambda_1\lesssim 1$. Then $R$ is summable provided $-\frac n2< \omega<2,  s>\frac n4-\frac12+\frac12\delta.$
\item $\lambda_4\lesssim 1$ and $\lambda_1\gg 1$. Then $G(\lambda)\lesssim
\lambda_1^{\frac n2-s}
\lambda_4^{2-\omega}
\lambda_3^{\frac n2-1+\delta-2s}.$ If $s>\frac n2$, $R$ is summable provided $\omega<2$. If $s\le \frac n2$, then $G(\lambda)\lesssim
\lambda_1^{-\delta}
\lambda_4^{2-\omega}
\lambda_3^{\,n-1+2\delta-3s}.$ Hence $R$ is summable provided $\omega<2, s>\frac n3-\frac13+\frac23\delta.$
\item  $\lambda_4\gg 1$ and $\lambda_1\lesssim 1$. Since $\lambda_4\ll\lambda_3$, we use $\lambda_4^s\lesssim \lambda_4^{-\delta}\lambda_3^{s+\delta},$
and obtain summability conditions $\omega>-\frac n2, s>\frac n2-1+2\delta.$
\item $\lambda_4\gg 1$ and $\lambda_1\gg 1$. Using again the relation $\lambda_4\ll \lambda_3$, we obtain $G(\lambda)\lesssim
\lambda_1^{\frac n2-s}
\lambda_4^{-\delta}
\lambda_3^{\frac n2-1+2\delta-s}.$ If $s>\frac n2$, then the summation is uniformly bounded. If $s\le \frac n2$, then \[
G(\lambda)\lesssim
\lambda_1^{-\delta}
\lambda_4^{-\delta}
\lambda_3^{\,n-1+3\delta-2s}.
\]
Hence, $R$ is summable provided $s>\frac n2-\frac12+\frac32\delta.$
\end{itemize}
Collecting all cases, we conclude that \eqref{eq:tri-sum} holds. Therefore the dual estimate is proved, which yields \eqref{eq:tri-step-1}. \end{proof}

Applying Lemma \ref{lem:bilinear-estimate} to  the Duhamel term gives
\[
\left\|
|D|^2\langle D\rangle^{-2}u^2
\right\|_{X_\omega^{s,b-1}([0,T])}
\lesssim
T^{1-b}\|u\|_{X_\omega^{s,b}([0,T])}^2.
\]
Therefore
\[
\|\Phi(u)\|_{X_\omega^{s,b}([0,T])}
\lesssim
\|g\|_{H_\omega^s}
+
\|h\|_{H_\omega^{s-2}}
+
T^{1-b}\|u\|_{X_\omega^{s,b}([0,T])}^2.
\]
Similarly, for two functions \(u,v\in X_\omega^{s,b}([0,T])\), $u^2-v^2=(u-v)(u+v),$ Lemma \ref{lem:bilinear-estimate} implies
\[
\|\Phi(u)-\Phi(v)\|_{X_\omega^{s,b}([0,T])}
\lesssim
T^{1-b}
\left(
\|u\|_{X_\omega^{s,b}([0,T])}
+
\|v\|_{X_\omega^{s,b}([0,T])}
\right)
\|u-v\|_{X_\omega^{s,b}([0,T])}.
\]
Let $M=\|g\|_{H_\omega^s}+\|h\|_{H_\omega^{s-2}}$. Choose \(R=2CM\), where \(C>0\) is the constant. Then choose \(T>0\) sufficiently small so that
\[
CT^{1-b}R\le \frac12.
\]
With this choice, \(\Phi\) maps the ball
\[
B_R=\{u\in X_\omega^{s,b}([0,T]):\|u\|_{X_\omega^{s,b}([0,T])}\le R\}
\]
into itself and is a contraction on \(B_R\). Hence, by the Banach fixed-point theorem, there exists a unique solution
\[
u\in X_\omega^{s,b}([0,T]).
\]
The same estimates also imply continuous dependence on the initial data. This proves the local well-posedness result for quadratic nonlinearity. Using Lemma \ref{lem:trilinear-estimate} on the cubic case, we obtain the same result. 

\subsection*{Remarks}\label{section4}
For the future work, we can consider more general combined-power nonlinearities. 
In addition, the estimates also suggest some directions for   future work. One natural direction is the persistence of spatial
analyticity. 
Another direction is global well-posedness. For generalized and higher-order
Boussinesq equations, global existence has been obtained under various assumptions
on the sign of the nonlinearity, the size of the data, or the energy level
\cite{BonaSachs1988,Linares1993}. However, in our case, global well-posedness requires
a detailed understanding of the modified energy conservation law. Hence, it will be interesting to study the global well-posedness.

\bibliographystyle{plain}
\bibliography{references}

\end{document}